\patchcmd{\abstract}{\scshape\abstractname}{\textbf{\abstractname}}{}{}
\numberwithin{equation}{section}
\theoremstyle{plain}
\newtheorem{theorem}{Theorem}[section]
\newtheorem{lemma}[theorem]{Lemma}
\newtheorem{corollary}[theorem]{Corollary}
\theoremstyle{definition}
\theoremstyle{remark}
\newtheorem{remark}[theorem]{Remark}
\newcommand{\aint}{{\int\hspace*{-4.3mm}\diagup}}
\def\dashint{\operatorname%
{\,\,\text{\bf--}\kern-.98em\DOTSI\intop\ilimits@\!\!}}
\def\bR{\mathbb{R}}
\def\bZ{\mathbb{Z}}
\newlength{\defbaselineskip}
\begin{document}

\title{Gradient estimates of solutions to the insulated conductivity problem in dimension greater than two}
\author{YanYan Li\footnote{Department of
Mathematics, Rutgers University, 110 Frelinghuysen Rd, Piscataway,
NJ 08854, USA. Email: yyli@math.rutgers.edu.}~\footnote{Partially supported by NSF Grants DMS-1501004, DMS-2000261, and Simons Fellows Award 677077.}\quad and \quad Zhuolun Yang\footnote{Department of Mathematics, Rutgers University, 110 Frelinghuysen Rd, Piscataway,
NJ 08854, USA. Email: zy110@math.rutgers.edu.}~\footnote{Partially supported by NSF Grants DMS-1501004 and DMS-2000261.}}
\date{}

\maketitle

\begin{abstract}
We study the insulated conductivity problem with inclusions embedded in a bounded domain in $\bR^n$. The gradient of solutions may blow up as $\varepsilon$, the distance between inclusions, approaches to $0$. An upper bound for the blow up rate was proved to be of order $\varepsilon^{-1/2}$. The upper bound was known to be sharp in dimension $n = 2$. However, whether this upper bound is sharp in dimension $n \ge 3$ has remained open. In this paper, we improve the upper bound in dimension $n \ge 3$ to be of order $\varepsilon^{-1/2 + \beta}$, for some $\beta > 0$.
\end{abstract}

\section{Introduction and main result}

Let $\Omega$ be a bounded domain in $\bR^n$ with $C^{2}$ boundary, and let $D_{1}^{*}$ and $D_{2}^{*}$ be two open sets whose closure belongs to $\Omega$, touching only at the origin with the inner normal vector of $\partial{D}_{1}^{*}$ pointing in the positive $x_{n}$-direction. Translating $D_{1}^{*}$ and $D_{2}^{*}$ by $\frac{\varepsilon}{2}$ along $x_{n}$-axis, we obtain
$$D_{1}^{\varepsilon}:=D_{1}^{*}+(0',\frac{\varepsilon}{2}),\quad\mbox{and}\quad\,D_{2}^{\varepsilon}:=D_{2}^{*}-(0',\frac{\varepsilon}{2}).$$
When there is no confusion, we drop the superscripts $\varepsilon$ and denote $D_{1}:=D_{1}^{\varepsilon}$ and $D_{2}:=D_{2}^{\varepsilon}$. Denote $\widetilde{\Omega} := \Omega \setminus \overline{(D_1 \cup D_2)}$, we consider the following elliptic equation with Dirichlet boundary data:
\begin{equation}\label{equk}
\begin{cases}
\mathrm{div}\Big(a_{k}(x)\nabla{u}_{k}\Big)=0&\mbox{in}~\Omega,\\
u_{k}=\varphi(x)&\mbox{on}~\partial\Omega,
\end{cases}
\end{equation}
where $\varphi\in{C}^{2}(\partial\Omega)$ is given, and
$$a_{k}(x)=
\begin{cases}
k\in(0,\infty)&\mbox{in}~D_{1}\cup{D}_{2},\\
1&\mbox{in}~\widetilde{\Omega}.
\end{cases}
$$
The equation above can be considered as a simple model for electric conduction, where $a_k$ refers to conductivities, which can be assumed to be 1 in the matrix after normalization, and the solution $u_k$ gives the voltage potential. From an engineering point of view, it is very important to estimate $\nabla u_k$, which represents the electric fields, in the narrow region between the inclusions. This problem is analogous to a linear system of elasticity studied by Babu\v{s}ka, Andersson, Smith and Levin \cite{BASL}, where they analyzed numerically that, when the ellitpicity constants are bounded away from $0$ and infinity, gradient of solutions remain bounded independent of $\varepsilon$, the distance between inclusions. Bonnetier and Vogelius \cite{BV} proved that for a fixed $k$, $|\nabla u_k|$ remains bounded as $\varepsilon$ goes to 0, for circular inclusions $D_1$ and $D_2$ in dimension $n = 2$. This result was extended by Li and Vogelius \cite{LV} to general second order elliptic equation of divergence form with piecewise H\"older coefficients and general shape of inclusions $D_1$ and $D_2$ in any dimension. Furthermore, they established a stronger $C^{1,\alpha}$ control of $u_k$, which is independent of $\varepsilon$, in each region. Li and Nirenberg \cite{LN} further extended this $C^{1,\alpha}$ result to general second order elliptic systems of divergence form.

When $k$ equals to $\infty$ (perfect conductor) or $0$ (insulator), it was shown in \cite{Kel, BudCar, Mar} that the gradient of solutions generally becomes unbounded, as $\varepsilon \to 0$. When $k$ goes to $\infty$, $u_k$ converges to the solution of the following perfect conductivity problem: 
\begin{equation}\label{equinfty}
\begin{cases}
\Delta{u}=0&\mbox{in}~\widetilde{\Omega},\\
u=C_i \mbox{ (Constants)}&\mbox{on}~\partial{D}_{i},~i=1,2,\\
\int_{\partial{D}_{i}}\frac{\partial{u}}{\partial\nu}=0&i=1,2,\\
u=\varphi(x)&\mbox{on}~\partial\Omega.
\end{cases}
\end{equation}
When $k$ goes to $0$, $u_k$ converges to the solution of the following insulated conductivity problem:
\begin{equation}\label{equzero}
\left\{
\begin{aligned}
-\Delta u &=0 \quad \mbox{in }\widetilde{\Omega},\\
\frac{\partial u}{\partial \nu} &= 0 \quad \mbox{on}~\partial{D}_{i},~i=1,2,\\
 u &= \varphi \quad \mbox{on } \partial \Omega.
\end{aligned}
\right.
\end{equation}
See, e.g., Appendix of \cite{BLY1} and \cite{BLY2} for derivations of the above equations. Here $\nu$ denotes the inward unit normal vectors on $\partial D_i$, $i = 1,2$.

Ammari et al. proved in \cite{AKLLL} and \cite{AKL}, among other things, the following. Let $D_1^*$ and $D_2^*$ be unit balls in $\bR^2$, and let $H$ be a harmonic function in $\bR^2$. They considered the perfect and insulated conductivity problems in $\bR^2$:
\begin{equation*}
\begin{cases}
\Delta{u}=0&\mbox{in}~\bR^2\setminus\overline{(D_1 \cup D_2)},\\
u=C_i \mbox{ (Constants)}&\mbox{on}~\partial{D}_{i},~i=1,2,\\
\int_{\partial{D}_{i}}\frac{\partial{u}}{\partial\nu}=0&i=1,2,\\
u(x)-H(x) = O(|x|^{-1})&\mbox{as}~|x| \to \infty,
\end{cases}
\end{equation*}
and
\begin{equation*}
\begin{cases}
\Delta{u}=0&\mbox{in}~\bR^2\setminus\overline{(D_1 \cup D_2)},\\
\frac{\partial u}{\partial \nu} = 0 &\mbox{on}~\partial{D}_{i},~i=1,2,\\
u(x)-H(x) = O(|x|^{-1})&\mbox{as}~|x| \to \infty.
\end{cases}
\end{equation*}
In both cases, they proved that for some $C$ independent of $\varepsilon$,
$$\| \nabla u\|_{L^\infty(B_4)} \le C \varepsilon^{-1/2}.$$
They also showed that the upper bounds are optimal in the sense that for appropriate $H$,
$$\| \nabla u\|_{L^\infty(B_4)} \ge \varepsilon^{-1/2}/C.$$
Yun extended in \cite{Y1} and \cite{Y2} the results allowing $D_1^*$ and $D_2^*$ to be any bounded strictly convex smooth domains.

The above gradient estimates were localized and extended to higher dimensions by Bao, Li and Yin in \cite{BLY1} and \cite{BLY2}. For the perfect conductor case, they considered problem \eqref{equinfty} and proved in \cite{BLY1} that
\begin{equation*}
\begin{cases}
\| \nabla u \|_{L^\infty(\widetilde{\Omega})} \le C\varepsilon^{-1/2} \|\varphi\|_{C^2(\partial \Omega)} &\mbox{when}~n=2,\\
\| \nabla u \|_{L^\infty(\widetilde{\Omega})} \le C|\varepsilon \ln \varepsilon|^{-1} \|\varphi\|_{C^2(\partial \Omega)} &\mbox{when}~n=3,\\
\| \nabla u \|_{L^\infty(\widetilde{\Omega})} \le C\varepsilon^{-1} \|\varphi\|_{C^2(\partial \Omega)} &\mbox{when}~n\ge 4.
\end{cases}
\end{equation*}
The above bounds were shown to be optimal in the paper. For further works on the perfect conductivity problem and closely related ones, see e.g. \cite{ACKLY,BT1,BT2,DL,KLY1,KLY2,L,LLY,LWX,BLL,BLL2,DZ,KL,CY,ADY,Gor,LimYun} and the references therein.

For the insulated problem \eqref{equzero}, it was proved in \cite{BLY2} that
\begin{equation}\label{insulated_upper_bound}
\| \nabla u \|_{L^\infty(\widetilde{\Omega})} \le C\varepsilon^{-1/2} \|\varphi\|_{C^2(\partial \Omega)}\quad \mbox{when}~n\ge 2.
\end{equation}
The upper bound is optimal for $n = 2$ as mentioned earlier, while it was not known whether it is optimal in dimensions $n \ge 3$.

Yun \cite{Y3} considered the following insulated problem in $\bR^3$ minus 2 balls: Let $H$ be a harmonic function in $\bR^3$, $D_1 = B_1\left(0,0,1+\frac{\varepsilon}{2} \right)$, and $D_2 = B_1\left(0,0,-1-\frac{\varepsilon}{2} \right)$,
\begin{equation*}
\begin{cases}
\Delta{u}=0&\mbox{in}~\bR^3\setminus\overline{(D_1 \cup D_2)},\\
\frac{\partial u}{\partial \nu} = 0 &\mbox{on}~\partial{D}_{i},~i=1,2,\\
u(x)-H(x) = O(|x|^{-2})&\mbox{as}~|x| \to \infty.
\end{cases}
\end{equation*}
He proved that for some positive constant $C$ independent of $\varepsilon$,
$$\max_{|x_3|\le \varepsilon/2}|\nabla u(0,0,x_3)| \le C \varepsilon^{\frac{\sqrt{2}-2}{2}}.$$
He also showed that this upper bound of $|\nabla u|$ on the $\varepsilon$-segment connecting $D_1$ and $D_2$ is optimal for $H(x) \equiv x_1$.

In this paper, we focus on the insulated conductivity problem \eqref{equzero} in dimension $n \ge 3$, and improve the upper bound \eqref{insulated_upper_bound} to the rate $\varepsilon^{-1/2 + \beta}$, for some $\beta > 0$. Analogous questions for elliptic system are still open, and we give some discussions in Section 4. We point out that the insulator case for Lam\'{e} systems in dimension $n = 2$ was studied by Lim and Yu \cite{LimYu}.

From now on, we assume that $\partial{D}_{1}^{*}$ and $\partial{D}_{2}^{*}$ are $C^2$, and they are relatively convex near the origin. That is, for some positive constants $R_0, \kappa$, we assume that when $0<|x'|<R_{0}$, $\partial{D}_{1}^{*}$ and $\partial{D}_{2}^{*}$ are respectively the graphs of two $C^{2}$ functions $f$ and $g$ in terms of $x'$, and 
$$f(x')>g(x'),\quad\mbox{for}~~0<|x'|<R_{0},$$
\begin{equation}\label{fg_0}
f(0')=g(0')=0,\quad\nabla_{x'}f(0')=\nabla_{x'}g(0')=0,
\end{equation}
\begin{equation}\label{fg_1}
\nabla^{2}_{x'}(f-g)(x')\geq\kappa I_{n-1},\quad\mbox{for}~~0<|x'|<R_{0},
\end{equation}
where $I_{n-1}$ denotes the $(n-1) \times (n-1)$ identity matrix.
Let $a(x) \in C^\alpha(\overline{\widetilde{\Omega}})$, for some $\alpha \in (0,1)$, be a symmetric, positive definite matrix function satisfying
$$\lambda \le a(x) \le \Lambda, \quad \mbox{for }x \in \widetilde{\Omega},$$
for some positive constants $\lambda, \Lambda$. Let $\nu = (\nu_1, \cdots, \nu_n)$ denote the unit normal vector on $\partial D_1$ and $\partial D_2$, pointing towards the interior of $D_1$ and $D_2$.
We consider the following insulated conductivity problem:
\begin{equation}\label{main_problem}
\left\{
\begin{aligned}
-\partial_i (a^{ij} \partial_j u) &=0 \quad \mbox{in }\widetilde{\Omega},\\
a^{ij} \partial_j u \nu_i &= 0 \quad \mbox{on } \partial (D_1 \cup D_2),\\
 u &= \varphi \quad \mbox{on } \partial \Omega,
\end{aligned}
\right.
\end{equation}
where $\varphi \in C^{2}(\partial \Omega)$ is given. \\

For $0 < \,r\leq\,R_{0}$, we denote
\begin{align}\label{domain_def_Omega}
\Omega_{x_0,r}:=\left\{(x',x_{n})\in \widetilde{\Omega}~\big|~-\frac{\varepsilon}{2}\right.&\left.+g(x')<x_{n}<\frac{\varepsilon}{2}+f(x'),~|x' - x_0'|<r\right\},\nonumber\\
\Gamma_+ :=& \left\{ x_n = \frac{\varepsilon}{2}+f(x'),~|x'|<R_0\right\},\\
\Gamma_- :=& \left\{ x_n = -\frac{\varepsilon}{2}+g(x'),~|x'|<R_0\right\}. \nonumber
\end{align}
Since the blow-up of gradient can only occur in the narrow region between $D_1$ and $D_2$, we will focus on the following problem near the origin:

\begin{equation}\label{main_problem_narrow}
\left\{
\begin{aligned}
-\partial_i (a^{ij} \partial_j u) &=0 \quad \mbox{in }\Omega_{0,R_0},\\
a^{ij} \partial_j u \nu_i &= 0 \quad \mbox{on } \Gamma_+ \cup \Gamma_-,\\
\end{aligned}
\right.
\end{equation}
where $\nu = (\nu_1, \cdots, \nu_n)$ denotes the unit normal vector on $\Gamma_+$ and $\Gamma_-$, pointing upward and downward respectively.

Here is our main result in the paper.\\

\begin{theorem}\label{main_thm}
Let $f,g,a,\alpha$ be as above, and let $u \in H^1(\Omega_{0,R_0})$ be a solution of \eqref{main_problem_narrow} in dimension $n \ge 3$. There exist positive constants $r_0, \beta$ and $C$ depending only on $n$, $\lambda$, $\Lambda$, $R_0$, $\kappa$, $\alpha$, $\|a\|_{C^\alpha(\Omega_{0,R_0})}$, $\|f\|_{C^{2}(\{|x'| \le R_0\})}$ and $\|g\|_{C^{2}(\{|x'| \le R_0\})},$ such that
\begin{equation}\label{main_result}
|\nabla u (x_0)| \le C  \| u\|_{L^\infty(\Omega_{0,R_0})} \left(\varepsilon + |x_0'|^2 \right) ^{-1/2 + \beta},
\end{equation}
for all $x_0 \in \Omega_{0 , r_0}$ and $\varepsilon \in (0,1)$.\\
\end{theorem}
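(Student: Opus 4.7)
My plan is to prove \eqref{main_result} by iterating an improvement-of-oscillation on dyadic horizontal scales $|x'|$, the gain being supplied by $C^{1,\alpha}$ Schauder regularity of an $(n-1)$-dimensional averaged limit equation; a standard rescaled Lipschitz estimate then converts oscillation decay into the claimed gradient bound.

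I first straighten $\Gamma_\pm$ via the change of variables $y' = x'$, $y_n = (2x_n - (f+g)(x'))/((f - g)(x') + \varepsilon)$, so that $\Gamma_\pm$ become $\{y_n = \pm 1\}$ and \eqref{main_problem_narrow} takes divergence form on $B'_{R_0}\times(-1,1)$ with Neumann conditions on the top and bottom. For each scale $t \in [C\sqrt{\varepsilon}, r_0]$ I rescale horizontally, setting $z' = y'/t$ and $v(z) := \widetilde u(tz', z_n)$. A direct computation, using \eqref{fg_0}--\eqref{fg_1}, shows that $v$ satisfies on $B'_1 \times (-1, 1)$ an anisotropic divergence-form equation whose $\partial_n$-coefficient is of order one while its $\partial_{z'}$-coefficient is of order $(\varepsilon + t^2)^2/t^2$, with Neumann conditions on $\{z_n = \pm 1\}$ and with coefficients converging in $C^\alpha$ as $\varepsilon/t^2 \to 0$ to those of a definite limit operator.

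The heart of the matter is the limit equation. In the limit $\varepsilon/t^2 \to 0$, the leading-order equation $\partial_n(A^0_{nn}\partial_n V) = 0$ together with homogeneous Neumann conditions forces $V = V(z')$. Matching at the next order and integrating over $z_n \in (-1, 1)$ (the boundary terms in the expanded Neumann condition cancelling the interior integral appropriately) yields the effective equation $\partial_i(\overline A_{ij}(z')\partial_j V) = 0$ on $B'_1 \subset \bR^{n-1}$, uniformly elliptic with $C^\alpha$ coefficients. Because $n - 1 \ge 2$, classical Schauder theory provides interior $C^{1,\alpha}$ regularity, hence
\begin{equation*}
\inf_{L \text{ affine}}\sup_{B'_\rho \times (-1, 1)}|V - L(z')| \le C\rho^{1 + \alpha}\|V\|_{L^\infty(B'_1 \times (-1,1))}\quad \text{for all } \rho \in (0, 1/2).
\end{equation*}
This is exactly where the hypothesis $n \ge 3$ enters essentially: when $n - 1 = 1$ the averaged equation is a first-order ODE whose solutions are only Lipschitz, so no scale-invariant gain is available, matching the sharpness of $\varepsilon^{-1/2}$ when $n = 2$.

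A compactness/contradiction argument, with compactness of $v$ supplied by the $L^\infty$ bound \eqref{insulated_upper_bound} combined with interior $C^\alpha$ estimates, then transfers this $C^{1,\alpha}$ gain to the $\varepsilon$-problem: there exist $\rho_0 \in (0, 1)$, $\theta \in (0, \rho_0)$ and $\eta > 0$ such that whenever $\varepsilon/t^2 \le \eta$ and $\rho_0 t \le r_0$,
\begin{equation*}
\inf_{L \text{ affine}}\mathrm{osc}_{\Omega_{0, \rho_0 t}}(u - L) \le \theta\,\inf_{L \text{ affine}}\mathrm{osc}_{\Omega_{0, t}}(u - L).
\end{equation*}
Iterating from $t_0 = r_0$ down to $t \sim \sqrt{\varepsilon/\eta}$ and telescoping the affine tilts --- whose slopes at the $k$-th step are of order $(\theta/\rho_0)^k$, summable precisely because $\theta < \rho_0$ --- yields an affine function $L_*$ with $\mathrm{osc}_{\Omega_{0, t}}(u - L_*) \le C\|u\|_{L^\infty}\,t^{1 + 2\beta}$ for $2\beta := \log(\rho_0/\theta)/\log(1/\rho_0) > 0$. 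For $x_0 \in \Omega_{0, r_0}$ I then set $t := \max(|x_0'|, \sqrt{\varepsilon/\eta})$ and $\delta_0 := \varepsilon + (f - g)(x_0') \sim \varepsilon + t^2$; isotropically rescaling a neighborhood of $x_0$ by $\delta_0$ renders the problem uniformly elliptic on a cylinder of proportions $t/\delta_0 : 1$, and standard Lipschitz estimates up to the Neumann boundary give $|\nabla u(x_0)| \le C\delta_0^{-1}\mathrm{osc}_{\Omega_{0, Ct}}(u - L_*) + |\nabla L_*| \le C\|u\|_{L^\infty}(\varepsilon + |x_0'|^2)^{-1/2 + \beta}$, which is \eqref{main_result}. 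The principal obstacle is isolating the correct limit equation by ordering the powers of $t$ and $\sqrt{\varepsilon}$ in the rescaling, verifying that its $z_n$-averaged form is uniformly elliptic with $C^\alpha$ coefficients so that Schauder regularity supplies the strict gain $\theta < \rho_0$, and tracking the affine tilts across scales so that the telescoping slope-series is summable; the quantitative exponent $\beta > 0$ is rigidly determined by the Schauder exponent of the averaged $(n-1)$-dimensional equation, simultaneously explaining the gain for $n \ge 3$ and its failure for $n = 2$.
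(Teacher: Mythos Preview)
Your proposal takes a genuinely different route from the paper, but it contains a real gap at the step where you identify the limit equation.

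After the flattening $y_n=(2x_n-(f+g))/((f-g)+\varepsilon)$, the transformed coefficients on $B'_{R_0}\times(-1,1)$ satisfy $b^{ii}\sim h(y')$ for $i<n$ and $b^{nn}\sim 1/h(y')$, where $h(y')=\varepsilon+(f-g)(y')\sim \varepsilon+|y'|^2$. Integrating in $y_n$ (using the Neumann condition) the effective $(n-1)$-dimensional equation for the average $\bar v$ is
\[
\operatorname{div}_{y'}\!\big(h(y')\,\bar A(y')\,\nabla_{y'}\bar v\big)=0.
\]
After your horizontal rescaling $z'=y'/t$ this becomes an equation with weight $\sim \varepsilon/t^2+|z'|^2$, and in the limit $\varepsilon/t^2\to 0$ the weight is $\sim|z'|^2$, which \emph{vanishes} at the origin. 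The averaged limit operator is therefore \emph{not} uniformly elliptic on $B'_1$, and classical Schauder theory does not yield $C^{1,\alpha}$ regularity at $z'=0$. In particular, affine functions are not solutions of $\operatorname{div}(|z'|^2\nabla V)=0$ (indeed $\operatorname{div}(|z'|^2 e_i)=2z_i'\ne 0$), so the affine-tilt improvement of oscillation $\theta<\rho_0$ cannot be obtained from the limit problem as you describe. The homogeneous solutions of the degenerate limit equation have exponents $\gamma$ solving $\gamma^2+(n-1)\gamma-\lambda_k=0$ with $\lambda_k$ the spherical eigenvalues on $S^{n-2}$; the smallest positive one is $\gamma_1=\tfrac12\big(-(n-1)+\sqrt{(n-1)^2+4(n-2)}\big)<1$, so at best one gets $C^{\gamma_1}$-type oscillation decay, not $C^{1,\alpha}$. (For $n=3$ this gives $\gamma_1=\sqrt{2}-1$, matching Yun's sharp exponent on the segment.) A correct compactness-based proof along your lines therefore requires analyzing this degenerate weighted equation, not invoking uniform Schauder.

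The paper avoids the degeneracy altogether. It works on annular shells $\Omega_{x_0,r}\setminus\Omega_{x_0,r/2}$ with $5|x_0'|<r<\delta^{1-\gamma}$, on which the gap height is uniformly $\sim r^2$; after flattening and even reflection across $\{y_n=\pm h_r\}$ one obtains a uniformly elliptic equation on a unit-scale annular cylinder, and the \emph{scalar Harnack inequality} (valid because the annulus is connected when $n\ge 3$) gives $\operatorname{osc}_{\Omega_{x_0,r}}u\le 2^{-\sigma}\operatorname{osc}_{\Omega_{x_0,2r}}u$ after the maximum principle. Iterating yields $\operatorname{osc}_{\Omega_{x_0,\delta}}u\le C\|u\|_{L^\infty}\delta^{\gamma\sigma}$, and a final rescaling to a unit cylinder plus the Li--Nirenberg gradient lemma (Lemma~\ref{gradient_lemma}, with $l$-independent constant) converts oscillation decay into $|\nabla u(x_0)|\le C\|u\|_{L^\infty}\delta^{-1+\gamma\sigma}$. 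The role of $n\ge 3$ in the paper is purely topological (connectedness of the shell for Harnack), not regularity-theoretic.
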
 

Let $u \in H^1(\widetilde{\Omega})$ be a weak solution of \eqref{main_problem}. By the maximum principle and the gradient estimates of solutions of elliptic equations,
\begin{equation}\label{boundedness_u}
\|u\|_{L^\infty(\widetilde{\Omega})} \le \|\varphi\|_{L^\infty(\partial \Omega)},
\end{equation}
and
$$\| \nabla u\|_{L^\infty(\widetilde{\Omega} \setminus \Omega_{0, r_0} )} \le C\| \varphi\|_{C^{2}(\partial \Omega)}.$$
Therefore, a corollary of Theorem \ref{main_thm} is as follows.\\

\begin{corollary}
Let $u \in H^1(\widetilde{\Omega})$ be a weak solution of \eqref{main_problem} in dimension $n \ge 3$. There exist positive constants $\beta$ and $C$ depending only on $n$, $\lambda$, $\Lambda$, $R_0$, $\kappa$, $\|a\|_{C^\alpha}$, $\|f\|_{C^{2}}$ and $\|g\|_{C^{2}},$ such that
\begin{equation}\label{main_result_2}
\|\nabla u\|_{L^\infty(\widetilde{\Omega})} \le C  \| \varphi\|_{C^{2}(\partial \Omega)} \varepsilon ^{-\frac{1}{2} + \beta}.
\end{equation}\\
\end{corollary}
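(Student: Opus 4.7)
The plan is to obtain the improved exponent via a dyadic iteration in scale around $x_0$, using an anisotropic rescaling that maps each dyadic cell of the narrow region to a unit-size reference domain, and then extracting a small Hölder-type gain at each step from a compactness/rigidity argument. I first reduce to the case where $\rho := \sqrt{\varepsilon + |x_0'|^2}$ is small; otherwise the estimate follows from standard interior/boundary elliptic theory. I also normalize so that $\|u\|_{L^\infty(\Omega_{0,R_0})} = 1$, and note that the bound \eqref{insulated_upper_bound} already gives the crude starting estimate $|\nabla u| \le C\rho^{-1}$.

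At the point $x_0$ I introduce the anisotropic change of variables $y' = (x' - x_0')/\rho$, $y_n = (x_n + \varepsilon/2 - g(x'))/\rho^2$, which maps a cell of horizontal size $\rho$ and vertical size $\rho^2$ centered at $x_0$ onto a unit-size cylinder whose top and bottom are, up to an $O(\rho)$ perturbation, flat. The rescaled function $v(y) := u(x)$ then satisfies an equation of the form $\rho^2 \partial_i(\tilde a^{ij}\partial_j v) + \partial_n(\tilde a^{nn}\partial_n v) + \text{lower order} = 0$ with Neumann data on the top and bottom surfaces, which converges as $\rho \to 0$ to the flat Neumann problem $\partial_n^2 v_\infty = 0$ in $\bR^{n-1}\times(0,1)$. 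The structure in dimension $n\ge 3$ is what gives the extra room: the transversal cross-section is at least $2$-dimensional, enabling a spherical-harmonic argument in $y'$ unavailable when $n=2$.

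To run the iteration I aim to establish that there exist $\theta\in(0,1)$ and $r_0\in(0,R_0)$, depending only on the parameters listed in the theorem, such that for all $r$ with $2\rho \le r \le r_0$,
\begin{equation*}
\frac{1}{r^{n+1}}\int_{\Omega_{x_0,r/2}}|u - (u)_{x_0,r/2}|^2 \;\le\; \theta \cdot \frac{1}{r^{n+1}}\int_{\Omega_{x_0,r}}|u - (u)_{x_0,r}|^2,
\end{equation*}
where $(u)_{x_0,r}$ is an appropriate weighted average. The inequality would be proved by contradiction: if it failed, a sequence of rescaled solutions $v_k$ (with parameters $\varepsilon_k\to0$) would converge locally in $H^1$, after subtracting means, to a bounded limit $v_\infty$ on the slab $\bR^{n-1}\times(0,1)$ satisfying $\partial_n v_\infty = 0$ on the two flat faces and saturating the inequality with $\theta=1$; a Liouville-type argument then forces $v_\infty$ to be constant in $y_n$ and to violate the normalization, a contradiction. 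Iterating this Campanato-type inequality from scale $r_0$ down to scale $2\rho$ yields $\mathrm{osc}_{\Omega_{x_0,2\rho}} u \le C\rho^{2\beta}$, and combining with the $\varepsilon^{-1/2}$ gradient bound \eqref{insulated_upper_bound} applied to the rescaled $v$ in the unit cell delivers \eqref{main_result}.

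The main difficulty lies in the rigidity underlying the contradiction step, namely a Liouville theorem for bounded solutions of the flat limiting Neumann problem on $\bR^{n-1}\times(0,1)$: one must show that any such solution with vanishing integral mean on each slab $\Omega_{x_0,r}$ actually enjoys strict decay of its Campanato excess, or is confined to a small finite-dimensional subspace whose contribution can be subtracted off. A Fourier decomposition of $v_\infty$ in $y'$ reduces this to an ODE analysis of each mode; the zero mode is trivial in $y_n$ and the nonzero modes decay exponentially, and the hypothesis $n \ge 3$ provides the strict spectral gap that produces the exponent $\beta$. One must also track the $O(\rho)$ perturbation of the Neumann boundaries and the anisotropic $O(\rho^2)$ terms in the equation carefully, and show that both vanish cleanly in the limit while keeping all constants dependent only on $n,\lambda,\Lambda,R_0,\kappa,\alpha,\|a\|_{C^\alpha},\|f\|_{C^2},\|g\|_{C^2}$.
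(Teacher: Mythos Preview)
Your compactness/Campanato scheme breaks down at the rigidity step. With your anisotropic scaling $y'=(x'-x_0')/\rho$, $y_n=(x_n+\varepsilon/2-g(x'))/\rho^2$, the rescaled operator is (as you write) $\rho^{2}\partial_i(\tilde a^{ij}\partial_j v)+\partial_n(\tilde a^{nn}\partial_n v)+\cdots$, and the blow-up limit is the \emph{degenerate} equation $\partial_{y_n}^{2}v_\infty=0$ on $\bR^{n-1}\times(0,1)$ with Neumann data on the two faces. Every function of $y'$ alone solves this limit problem, so the proposed Liouville theorem is false: the Fourier modes in $y'$ satisfy $\partial_{y_n}^{2}\hat v_\infty(\xi,y_n)=0$ (there is no $|\xi|^{2}$ term because the tangential part of the operator has vanished), and the nonzero modes do not decay at all. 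Hence no contradiction is produced in your compactness argument, and the Campanato decay cannot be extracted this way. Relatedly, the asserted role of $n\ge3$ as a ``strict spectral gap'' in $y'$ has no content for this degenerate limit: the space of bounded limit solutions is the infinite-dimensional space of all bounded functions of $y'$, regardless of $n$. There is also a mismatch between the single anisotropic chart at scale $\rho$ and the claimed iteration over all scales $2\rho\le r\le r_0$; at scale $r\gg\rho$ the narrow region has height $\sim r^{2}$ at its edge but only $\sim\rho^{2}$ at its center, and your chart does not see this geometry.

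The paper avoids this degeneration entirely. It first flattens the narrow region by a change of variables whose Jacobian is uniformly bounded above and below (no anisotropy), then uses even reflection across the flat Neumann faces to extend the solution to a genuine uniformly elliptic equation on a full cylindrical annulus $Q_{2r,r}\setminus Q_{r/4,r}$. The assumption $n\ge3$ enters only through the \emph{connectedness} of this annulus (for $n=2$ it splits into two pieces), which is what makes the interior Harnack inequality applicable. Harnack plus the maximum principle then gives oscillation decay $\mathrm{osc}_{\Omega_{x_0,\delta}}u\le C\delta^{\gamma\sigma}\|u\|_{L^\infty}$ by a direct De~Giorgi-type iteration --- no compactness, no limit problem. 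The final gradient bound comes from a second reflection and a piecewise-$C^{\mu}$ interior estimate (Lemma~\ref{gradient_lemma}) whose constant is independent of the number of reflected layers. The Corollary itself is then immediate from Theorem~\ref{main_thm}, the maximum principle \eqref{boundedness_u}, and standard estimates away from the neck.
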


\begin{remark}
If there are more than two inclusions, estimate \eqref{main_result_2} still holds, with $\varepsilon$ being the minimal distance between inclusions.
\end{remark}

The rest of this paper will be organized as follows. In section 2, we prove a lemma which is used in the proof of Theorem \ref{main_thm}. Theorem \ref{main_thm} is proved in Section 3. In section 4, we give a gradient estimate to a problem for elliptic systems analogous to problem \eqref{main_problem_narrow}.\\

\section{A regularity lemma}

In this section, we give a regularity lemma for elliptic systems (elliptic equations when $N = 1$). Let us first describe the nature of domains and operators. We define $S$ to be a cylinder
$$S = \{ (x',x_n) \in \bR^n ~\big|~ |x'| < 1, |x_n| < 1 \},$$
and some constants $c_m$, with $0 \le m \le l$, such that
$$-1 = c_0 < c_1 < \cdots < c_l = 1,$$
and denote the integer $m_0$ to be the integer such that
$$c_{m_0 - 1} \le 0 < c_{m_0}.$$
We divide the domain $S$ into $l$ parts by setting
$$\Omega_m = \{x \in S ~\big|~ c_{m-1} < x_n < c_m \}, \quad \mbox{for }1 \le m \le l.$$
For $1 \le \alpha, \beta \le n, 1 \le i,j \le N$, let $A^{\alpha \beta}_{ij}(x)$ be a function such that
$$\| A^{\alpha \beta}_{ij} \|_{L^\infty(S)} \le \Lambda,$$
$$\int_S A^{\alpha \beta}_{ij}(x) \partial_\alpha \varphi_i(x) \partial_\beta \varphi_j(x) \ge \lambda \int_S |\nabla \varphi|^2, \quad \forall \varphi \in H_0^1(S; \bR^N),$$
for some $\lambda, \Lambda > 0$, 
and for each $1 \le m \le l$, $A^{\alpha \beta}_{ij}(x) \in C^\mu(\overline{\Omega}_m)$, for some $0 < \mu <1$. We denote $(A^{\alpha \beta}_{ij}(x))$ by $A(x)$.

For $1 \le \alpha \le n, 1 \le i \le N$, let
\begin{align*}
H(x) &= \{H_i\} \in L^\infty(S),\\
G(x) &= \{G_i^\alpha\} \in C^\mu(\overline{\Omega}_m),
\end{align*}
for all $m = 1 ,\cdots , l$. Then we have the following interior gradient estimate.\\

\begin{lemma}\label{gradient_lemma}
Let $A(x)$, $H(x)$ and $G(x)$ be as above. There exists a positive constant $C$, depending only on $n, \mu, \lambda, \Lambda$ and an upper bound of $\{\|A\|_{C^\mu(\overline{\Omega}_m)}\}_{m = 1}^l$, such that if $u \in H^1(S; \bR^N)$ is a weak solution to
$$\partial_\alpha (A^{\alpha \beta}_{ij}(x) \partial_\beta u_j) = H_i + \partial_\alpha G^\alpha_i \quad \mbox{in }S,$$
then
$$\| u\|_{L^\infty(\frac{1}{2}S)} + \| \nabla u\|_{L^\infty(\frac{1}{2}S)} \le C\left(\|u\|_{L^2(S)} + \|H\|_{L^\infty(S)} + \max_{1 \le m \le l} \|G\|_{C^\mu(\overline{\Omega}_m)} \right).$$\\
\end{lemma}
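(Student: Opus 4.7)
The lemma is a piecewise $C^{1,\mu}$-type regularity statement for a divergence-form elliptic system on the cylinder $S$ whose coefficient matrix $A$ is H\"older continuous on each horizontal slab $\Omega_m$ but may jump across the flat interfaces $\{x_n = c_m\}$. Because the equation is posed in the whole of $S$ in the weak sense, the correct transmission condition on the conormal flux $A^{\alpha\beta}_{ij}\partial_\beta u_j \nu_\alpha$ across each interface is automatically encoded. The plan is to combine a classical interior estimate inside each slab with an up-to-the-interface transmission estimate, and then patch these together by a finite covering of $\frac{1}{2}S$.

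For any subdomain compactly contained in some $\Omega_m$, the coefficients $A$ and the source $G$ lie in $C^\mu$, so the classical Schauder theory for divergence-form elliptic systems with H\"older coefficients gives the desired local $C^{1,\mu}$ bound in terms of $\|u\|_{L^2(\Omega_m)}$, $\|H\|_{L^\infty(S)}$, and $\|G\|_{C^\mu(\overline{\Omega}_m)}$. This handles points lying strictly inside a slab.

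The substance of the lemma is the two-phase estimate at each interface $\{x_n = c_m\}$. After translation, I would fix a base point $x_0$ on the interface and freeze the coefficients $A$ and $G$ to their values at $x_0$ on each side of the hyperplane, obtaining a constant-coefficient transmission problem with a flat interface. For such a problem one can establish $C^{1,\mu'}$ estimates up to the interface from each side by tangential Fourier transform (which reduces the matter to an ODE system in $x_n$ with an explicit jump relation), or equivalently by constructing the Green's matrix of the two-phase operator and using standard singular-integral bounds. This implies that on each ball centred at a boundary point of $\Omega_m$, the solution can be approximated in $L^2$ by a piecewise-affine function with error of order $r^{n/2 + 1 + \mu'}$.

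One then upgrades to the variable-coefficient case by a Campanato-type iteration: the $C^\mu$ modulus of $A$ on $\overline{\Omega}_m$ and the $C^\mu$ norms of $G$ on each side produce a perturbation that is summable over a dyadic family of shrinking balls, which telescopes to a piecewise $C^{1,\mu'}$ bound up to the interface. A finite covering argument over the $l$ slabs and $l-1$ interfaces then yields the stated $L^\infty$ bounds on both $u$ and $\nabla u$. The principal technical obstacle is the frozen two-phase estimate for the coupled system: one must verify that the relevant constant-coefficient operator remains Legendre--Hadamard elliptic on each half-space and that the transmission kernel has the right singular behaviour. This is essentially the content of the analogous two-phase estimate in Li--Nirenberg \cite{LN}, whose argument can be applied slab by slab.
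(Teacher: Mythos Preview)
Your approach has a genuine gap: the constant it produces depends on $l$, which is precisely what the lemma must avoid. The entire purpose of this lemma in the paper is to apply it after an even-reflection construction that stacks $l \sim 1/\delta$ copies of a thin slab, so an $l$-dependent bound is useless there. Your plan --- interior Schauder inside each $\Omega_m$, a two-phase transmission estimate at each flat interface, then ``a finite covering argument over the $l$ slabs and $l-1$ interfaces'' --- introduces $l$-dependence in two places. First, the covering itself has $O(l)$ patches. Second, and more seriously, your Campanato step freezes $A$ to a \emph{two}-phase piecewise constant across a single interface; but a ball of radius $r$ centred at an interface will in general contain many interfaces (the slab widths $c_m - c_{m-1}$ are not bounded below), so the frozen comparison problem is not two-phase and the perturbation is not controlled by $\|A\|_{C^\mu(\overline{\Omega}_m)}$ on just two neighbouring slabs.

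The paper's argument is different in exactly this respect. It does not work slab by slab. It goes back into the proof of \cite[Proposition~4.1]{LN} and observes that the only place $l$ enters there is through the quantities $\|A-\bar A\|_{Y^{1+\mu,2}}$, $\|G-\bar G\|_{Y^{1+\mu,2}}$, $\|H-\bar H\|_{Y^{\mu,2}}$, where $\bar A,\bar G$ are the piecewise-constant freezings that take, on each $\Omega_m$, the value of $A,G$ at the corner of $\Omega_m$ nearest the origin. Because the interfaces are flat and parallel, for $x\in rS\cap\Omega_m$ the distance to that corner is at most $|x|$, so $|A(x)-\bar A(x)|\le \|A\|_{C^\mu(\overline{\Omega}_m)}|x|^\mu$ uniformly in $m$, and one gets $\|A-\bar A\|_{Y^{1+\mu,2}}\le C\max_m\|A\|_{C^\mu(\overline{\Omega}_m)}$ with $C$ independent of $l$. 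That single observation is the missing idea; the rest of \cite{LN} then runs without change.
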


\begin{remark}
We point out that the constant $C$ in the Lemma is independent of $l$.
\end{remark}

\begin{proof}
The proof of Lemma \ref{gradient_lemma} is a modification of the proof of Proposition 4.1 in \cite{LN}. Even though the constant $C$ in \cite[Proposition 4.1]{LN} depends on $l$, the number of subdomains we divide in the domain $S$, this dependence only enters in estimating the quantities $\| A - \bar{A} \|_{Y^{1+\alpha, 2}}$, $\| G - \bar{G} \|_{Y^{1+\alpha, 2}}$, and $\| H - \bar{H} \|_{Y^{\alpha, 2}}$ which will be defined below. We will show that such quantity is independent of $l$ due to the nature of our domain $S$, and hence the constant $C$ in Lemma \ref{gradient_lemma} is independent of $l$.\\
\end{proof}

For $s > 0, 1 < p < \infty$, we define the norm
$$\|f\|_{Y^{s,p}}:= \sup_{0 < r \le 1} r^{1-s} \left( \aint_{rS} |f|^p \right)^{1/p}.$$
We define a piecewise-constant coefficients $\bar{A}$ associated to $A$ by setting
$$\bar{A}(x) := \left\{\begin{aligned}
\lim_{x \in \Omega_m, x \to (0', c_{m-1})} &A(x), &&\mbox{if }x \in \Omega_m, m > m_0;\\
&A(0), &&\mbox{if }x \in \Omega_{m_0};\\
\lim_{x \in \Omega_m, x \to (0', c_m)} &A(x), &&\mbox{if }x \in \Omega_m, m < m_0.
\end{aligned}
\right.$$
Similarly, we can define piecewise-constant tensor $\bar{G}$ associated to $G$. We also define a constant vector $\bar{H}$ associated to $H$ by
$$\bar{H} := \aint_S H.$$\\

\begin{lemma}
Let $A, \bar{A}, H, \bar{H}, G, \bar{G}$ be as above. Then there exists a positive constant $C$, depending only on $n$, such that
\begin{align*}
\| A - \bar{A} \|_{Y^{1+\mu , 2}} & \le C\max_{1 \le m \le l} \|A\|_{C^\alpha(\overline{S}_m)},\\
\| G - \bar{G} \|_{Y^{1+\mu , 2}} & \le C\max_{1 \le m \le l} \|G\|_{C^\alpha(\overline{S}_m)},\\
\| H - \bar{H} \|_{Y^{\mu , 2}} & \le  C\|H\|_{L^\infty(S)}.\\
\end{align*}
\end{lemma}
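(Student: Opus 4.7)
The plan is to verify each of the three $Y$-norm estimates by a direct pointwise argument, exploiting the specific choice of reference values in the definitions of $\bar A$, $\bar G$ and $\bar H$ so as to ensure the resulting constant is independent of the number of slices $l$. Throughout, I would fix $r \in (0,1]$; the desired $Y$-norms are then obtained by taking the supremum over $r$ at the end.

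First I would handle $H - \bar H$, which is essentially trivial: since $\bar H$ is the mean of $H$ over $S$, the pointwise inequality $|H(x) - \bar H| \le 2\|H\|_{L^\infty(S)}$ gives $(\aint_{rS}|H - \bar H|^2)^{1/2} \le 2\|H\|_{L^\infty(S)}$, and multiplying by $r^{1-\mu} \le 1$ (using $0 < \mu < 1$) yields the claim with $C = 2$.

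The main work is for $A - \bar A$; the estimate for $G - \bar G$ will be identical. I would establish the key geometric observation: letting $y_m$ denote the reference point used to define $\bar A$ on $\Omega_m$ --- namely $y_m = (0', c_{m-1})$ for $m > m_0$, $y_m = 0$ for $m = m_0$, and $y_m = (0', c_m)$ for $m < m_0$ --- one has $y_m \in \overline{\Omega}_m$ and
\[
|x - y_m| \le \sqrt{2}\, r \qquad \text{for every } x \in \Omega_m \cap rS.
\]
Indeed, $|x'| < r$ always, and a case-by-case check shows that both $x_n$ and the $n$-th coordinate of $y_m$ lie in the subinterval $(-r, r) \cap [c_{m-1}, c_m]$ of $(-r, r)$: for $m = m_0$ this uses $c_{m_0-1} \le 0 < c_{m_0}$; for $m > m_0$ one has $0 < c_{m-1} < x_n < \min(c_m, r)$; the case $m < m_0$ is symmetric. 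Granted this, the $C^\mu$-regularity of $A$ on $\overline{\Omega}_m$ gives the pointwise bound
\[
|A(x) - \bar A(x)| = |A(x) - A(y_m)| \le (\sqrt{2}\, r)^\mu \max_{1 \le m \le l} \|A\|_{C^\mu(\overline{\Omega}_m)}
\]
on each $\Omega_m \cap rS$, hence on all of $rS$. Squaring, averaging, taking square roots, multiplying by $r^{-\mu}$, and taking the supremum over $r$ closes the argument with $C = 2^{\mu/2}$, independent of $l$.

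The mild but essential obstacle is precisely this independence from $l$: a naive choice of reference point (say the centre of $\Omega_m$) would only yield a distance bound in terms of the thickness $c_m - c_{m-1}$, which is not controlled as $l \to \infty$. The trick --- and the main point of the lemma --- is to pin the reference point to the face of $\Omega_m$ closest to the origin, so that both $x_n$ and the reference's $n$-th coordinate are automatically trapped inside $(-r, r)$, giving a bound depending only on $r$.
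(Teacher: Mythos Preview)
Your proof is correct and follows essentially the same strategy as the paper: both arguments hinge on the observation that the reference points $y_m$ are chosen on the face of $\Omega_m$ nearest the origin, forcing $|x - y_m|$ to be controlled by $r$ alone, independently of $l$. The only cosmetic difference is that the paper records the slightly sharper pointwise bound $|x - y_m| \le |x|$ and then integrates $|x|^{2\mu}$ over $rS$, whereas you pass directly to the uniform bound $|x - y_m| \le \sqrt{2}\,r$; both routes yield the same $r^\mu$ decay.
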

\begin{proof}
The last inequality follows immediately from the definition of $Y^{\mu,2}$ and $\bar{H}$:
\begin{align*}
\| H - \bar{H} \|_{Y^{\mu , 2}} \le \sup_{0 < r \le 1} r^{1-\mu} \left( \aint_{rS} |H - \bar{H}|^2 \right)^{1/2} \le C\|H\|_{L^\infty(S)}.
\end{align*}
By a direct computation, we will have
\begin{align*}
\left( \aint_{rS} |A - \bar{A}|^2 \right)^{1/2} &\le \left( \frac{1}{|rS|} \sum_{m = 1}^l \int_{rS \cap S_m} |A(x) - \bar{A}(x)|^2 \, dx \right)^{1/2}\\
&\le \left[ \frac{1}{|rS|} \left( \sum_{m = 1}^{m_0 - 1} \|A\|^2_{C^\mu(\overline{S}_m)} \int_{rS \cap S_m} |x - (0', c_m)|^{2\mu} \, dx \right.\right.\\
&+ \|A\|^2_{C^\mu(\overline{S}_{m_0})} \int_{rS \cap S_{m_0}} |x|^{2\mu} \, dx\\
+ \sum_{m = m_0 + 1}^{l} &\left. \left.\|A\|^2_{C^\mu(\overline{S}_{m-1})} \int_{rS \cap S_{m-1}} |x - (0', c_{m-1})|^{2\mu} \,dx \right)  \right]^{1/2} \\
&\le \max_{1 \le m \le l} \|A\|_{C^\mu(\overline{S}_m)} \left( \aint_{rS} |x|^{2\mu} \, dx \right)^{1/2}\\
&\le C\max_{1 \le m \le l} \|A\|_{C^\mu(\overline{S}_m)}r^\mu.
\end{align*}
This proves the first inequality. The second inequality follows similarly.

\end{proof}

\section{Proof of Theorem \ref{main_thm}}

In this section, we prove Theorem \ref{main_thm}. For a small $r_0$ independent of $\varepsilon$, and any $x_0 \in \Omega_{0, r_0}$, we estimate $|\nabla u(x_0)|$ as follows: First we establish a Harnack inequality in $\Omega_{x_0, r} \setminus \Omega_{x_0, r/2}$, for $r > 0$ in a suitable range. Together with the maximum principle, this gives the oscillation of $u$ in $\Omega_{x_0, \delta}$ a decay $\delta^{2\beta}$, for some positive $\varepsilon$-independent $\beta$, where 
$$\delta:= (\varepsilon + |x_0'|^2)^{1/2}.$$ Then we perform a suitable change of variables in $\Omega_{x_0, \delta/4}$, and apply Lemma \ref{gradient_lemma} to obtain the desired estimate on $|\nabla u(x_0)|$.

We fix a $\gamma \in (0,1)$, and let $r_0 >0$ denote a constant depending only on  $n$, $\kappa$, $\gamma$, $R_0$, $\|f\|_{C^{2}}$ and $\|g\|_{C^{2}}$, whose value will be fixed in the proof. We will always consider $0 < \varepsilon \le r_0^2$. First, we require $r_0$ small so that for $|x_0'| < r_0$,
$$10\delta < \delta^{1- \gamma} < R_0/4.$$\\

\begin{lemma}\label{harnack_inequality}
There exists a small $r_0$, depending only on $n, \kappa, \gamma, R_0, \|f\|_{C^{2}}$ and $\|g\|_{C^{2}}$, such that for any $x_0 \in \Omega_{0,r_0}$, $5|x_0'| < r < \delta^{1- \gamma}$, if $u \in H^1(\Omega_{x_0, 2r} \setminus \Omega_{x_0, r/4})$ is a positive solution to the equation
$$
\left\{
\begin{aligned}
-\partial_i (a^{ij}(x) \partial_j u(x)) &=0 \quad \mbox{in }\Omega_{x_0, 2r} \setminus \Omega_{x_0, r/4},\\
a^{ij}(x) \partial_j u(x) \nu_i(x) &= 0 \quad \mbox{on } (\Gamma_+ \cup \Gamma_-) \cap \overline{\Omega_{x_0, 2r} \setminus \Omega_{x_0, r/4}},\\
\end{aligned}
\right.
$$
then,
\begin{equation}\label{harnack}
\sup_{\Omega_{x_0,r} \setminus \Omega_{x_0, r/2}} u \le C \inf_{\Omega_{x_0, r} \setminus \Omega_{x_0, r/2}} u,
\end{equation}
for some constant $C >0$ depending only on $n, \kappa, \lambda, \Lambda, R_0, \|f\|_{C^{2}}$ and $\|g\|_{C^{2}},$ but independent of $r$ and $u$.
\end{lemma}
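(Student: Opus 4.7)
The plan is to rescale the annulus to unit size and then establish Harnack on the resulting, possibly thin, Lipschitz domain by combining the classical Harnack inequality with an averaging argument that exploits the Neumann boundary condition. Set $y = (x - (x_0',0))/r$ and $\tilde u(y) := u(x)$, so that $\tilde u$ solves $-\partial_i(\tilde a^{ij}\partial_j \tilde u) = 0$ on
\[
\tilde\Omega := \{1/4 < |y'| < 2\}\cap\{-\tilde g(y') < y_n < \tilde f(y')\},
\]
with $\tilde f(y') = \varepsilon/(2r) + f(x_0'+ry')/r$ and $\tilde g$ analogous, and where $\tilde a^{ij}(y) := a^{ij}(x_0'+ry', ry_n)$ has the same ellipticity constants and $C^\alpha$ seminorm as $a^{ij}$. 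By \eqref{fg_0}--\eqref{fg_1} and $r > 5|x_0'|$, one has $|\nabla\tilde f|,|\nabla\tilde g|\le Cr$ and the cross-sectional thickness $\tau(y') := \tilde f(y') + \tilde g(y')$ is comparable on $\{1/4<|y'|<2\}$ to the single number $\tau := (\varepsilon + r^2)/r$. Thus $\tilde\Omega$ is a uniformly Lipschitz domain whose only nontrivial geometric parameter is $\tau$.

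Fix a small universal $c_0 > 0$. If $\tau \ge c_0$, the rescaled annulus is a bounded Lipschitz domain of bounded aspect ratio, and the classical Harnack inequality for divergence-form elliptic equations on such domains with Neumann boundary conditions (obtained by chaining a finite, $c_0$-dependent number of interior and boundary half-balls and invoking Moser/De~Giorgi) yields \eqref{harnack} with a uniform constant. The delicate case is $\tau < c_0$, which I would handle by a vertical averaging argument. Define
\[
\bar u(y') := \frac{1}{\tau(y')}\int_{-\tilde g(y')}^{\tilde f(y')}\tilde u(y',y_n)\,dy_n.
\]
Integrating $\partial_i(\tilde a^{ij}\partial_j\tilde u)=0$ across the vertical slice and applying the Leibniz rule, the boundary terms at $y_n = \tilde f(y')$ and $y_n = -\tilde g(y')$ are exactly those cancelled by the Neumann condition $\tilde a^{ij}\partial_j\tilde u\,\nu_i = 0$; this produces the divergence identity $\sum_{i<n}\partial_{y_i'}F_i(y') = 0$ with $F_i(y') = \int_{-\tilde g}^{\tilde f}\tilde a^{ij}\partial_{y_j}\tilde u\,dy_n$. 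Using interior and boundary $C^{1,\mu}$ estimates for $\tilde u$ (applicable because $\tilde\Omega$ is uniformly Lipschitz in half-boxes of width $\sim 1$) together with the Poincar\'e--Neumann inequality on each slice, one rewrites
\[
F_i(y') = \tau(y')\sum_{j<n}A^{ij}(y')\,\partial_{y'_j}\bar u(y') + R_i(y'),
\]
where the effective coefficients $A^{ij}$ inherit the ellipticity of $\tilde a^{ij}$ and $\|R_i\|_{L^\infty} \le C\tau^{1+\eta}\|\tilde u\|_{L^\infty}$ for some $\eta > 0$. Dividing by $\tau(y')$ yields an $(n-1)$-dimensional uniformly elliptic equation for $\bar u$ with $L^\infty$ right-hand side of size $O(\tau^\eta\|\tilde u\|_{L^\infty})$, and the classical Harnack inequality applied on $\{1/2<|y'|<1\}$ gives $\sup\bar u \le C\inf\bar u + C\tau^\eta \|\tilde u\|_{L^\infty}$.

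Finally, the Poincar\'e--Neumann inequality on each vertical slice combined with the same gradient bounds produces $\|\tilde u - \bar u\|_{L^\infty(\tilde\Omega)} \le C\tau^\eta \|\tilde u\|_{L^\infty}$. Writing $M = \sup\tilde u$, $m = \inf\tilde u$ on $\{1/2<|y'|<1\}\cap\tilde\Omega$, the two displays combine to give $M \le Cm + C\tau^\eta M$; choosing $c_0$ small enough that $Cc_0^\eta \le 1/2$ absorbs the error and delivers $M \le 2Cm$, which is the required estimate after rescaling back to $x$. The main obstacle is the thin regime: a direct Moser chain through balls of radius $\tau$ needs $\sim\tau^{-(n-1)}$ steps and yields a Harnack constant $C^{\tau^{-(n-1)}}$ that blows up as $\tau \to 0$; the averaging step, which uses the Neumann condition in a decisive way, is precisely what restores uniformity of the Harnack constant in $r$ and $\varepsilon$.
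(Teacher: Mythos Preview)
Your averaging-in-the-thin-direction strategy is a genuinely different route from the paper, but the argument as written has a real gap in the thin regime, and the paper's device is both simpler and exactly what would be needed to close your gap.

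The paper does not average. It first flattens the narrow region by the change of variables
\[
y' = x'-x_0',\qquad y_n = 2h_r\Big(\tfrac{x_n - g(x') + \varepsilon/2}{\varepsilon + f(x')-g(x')}-\tfrac12\Big),
\]
which sends $\Omega_{x_0,2r}\setminus\Omega_{x_0,r/4}$ to the straight annular cylinder $Q_{2r,h_r}\setminus Q_{r/4,h_r}$ with \emph{flat} Neumann faces $\{y_n=\pm h_r\}$. Because the faces are flat and the boundary condition is $b^{nj}\partial_j v=0$, one can extend $v$ by successive even reflections in $y_n$ to a weak solution of a uniformly elliptic equation on the full annular cylinder $Q_{2r,\infty}\setminus Q_{r/4,\infty}$. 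Restricting to height $r$ and rescaling by $1/r$ produces a positive solution on the fixed domain $Q_{2,1}\setminus Q_{1/4,1}$, which is connected precisely when $n\ge 3$, and the classical interior Harnack inequality there gives \eqref{harnack} after undoing the changes of variables. No thin-domain analysis is needed: reflection converts the thin slab into a fat one.

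In your proposal, the crucial step is the pair of error bounds $\|R_i\|_{L^\infty}\le C\tau^{1+\eta}\|\tilde u\|_{L^\infty}$ and $\|\tilde u-\bar u\|_{L^\infty}\le C\tau^{\eta}\|\tilde u\|_{L^\infty}$, which you justify by ``interior and boundary $C^{1,\mu}$ estimates \dots\ applicable because $\tilde\Omega$ is uniformly Lipschitz in half-boxes of width $\sim 1$''. This is where the argument breaks. On a slab of thickness $\tau$, standard Schauder or De~Giorgi boundary estimates apply only in (half-)balls that do not meet the \emph{opposite} face, hence of radius $\lesssim\tau$; they give $|\nabla\tilde u|\le C\tau^{-1}\|\tilde u\|_{L^\infty}$, not $|\nabla\tilde u|\le C\|\tilde u\|_{L^\infty}$. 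With that bound, $\|\tilde u-\bar u\|_{L^\infty}\le \tau\|\partial_n\tilde u\|_{L^\infty}\le C\|\tilde u\|_{L^\infty}$ carries no smallness, and the term $\int\tilde a^{in}\partial_n\tilde u\,dy_n$ inside $F_i$ (note $\tilde a^{in}$ need not be small for a general elliptic matrix) is only $O(\|\tilde u\|_{L^\infty})$, not $O(\tau^{1+\eta}\|\tilde u\|_{L^\infty})$. Consequently the absorption step $M\le Cm + C\tau^\eta M$ fails. To get a $\tau$-uniform gradient bound on the thin slab with Neumann data one is led straight back to the flatten-and-reflect trick above; but once you have reflected, the interior Harnack on $Q_{2,1}\setminus Q_{1/4,1}$ already gives the lemma directly, and the averaging layer is no longer needed. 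Your diagnosis at the end---that a naive Moser chain through balls of radius $\tau$ blows up---is correct; the cure is reflection, not averaging.
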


\begin{proof}
We only need to prove \eqref{harnack} for $|x_0'| > 0$, since the $|x_0'| = 0$ case follows from the result for $|x_0'|>0$ and then sending $|x_0'|$ to $0$. We denote
$$h_r := \varepsilon + f\left(x_0' - \frac{r}{4} \frac{x_0'}{|x_0'|}\right) - g\left(x_0' - \frac{r}{4} \frac{x_0'}{|x_0'|}\right),$$
and perform a change of variables by setting
\begin{equation}\label{x_to_y_1}
\left\{
\begin{aligned}
y' &= x' - x_0' ,\\
y_n &= 2 h_r \left( \frac{x_n - g(x') + \varepsilon/2}{\varepsilon + f(x') - g(x')} - \frac{1}{2} \right),
\end{aligned}\right.
\quad (x',x_n) \in \Omega_{x_0, 2r} \setminus \Omega_{x_0, r/4}.
\end{equation}
This change of variables maps the domain $\Omega_{x_0, 2r} \setminus \Omega_{x_0, r/4}$ to an annular cylinder of height $h_r$, denoted by $Q_{2r, h_r} \setminus Q_{r/4, h_r}$, where
\begin{equation}\label{Q_s_t}
Q_{s,t}:= \{ y = (y',y_n) \in \bR^n ~\big|~  |y'| < s,  |y_n| < t\},
\end{equation}
for $s,t > 0$. We will show that the Jacobian matrix of the change of variables \eqref{x_to_y_1}, denoted by $\partial_x y$, and its inverse matrix $\partial_y x$ satisfy
\begin{equation}\label{transformation_lipschitz}
|(\partial_x y)^{ij}| \le C, \quad |(\partial_y x)^{ij}| \le C, \quad \mbox{for }y \in  Q_{2r, h_r} \setminus Q_{r/4, h_r},
\end{equation}
where $C > 0$ depends only on $n, \kappa, R_0, \|f\|_{C^{2}}$ and $\|g\|_{C^{2}}$.

Let $v(y) = u(x)$, then $v$ satisfies
\begin{equation}\label{equation_v}
\left\{
\begin{aligned}
-\partial_i(b^{ij}(y) \partial_j v(y)) &=0 \quad \mbox{in } Q_{2r, h_r} \setminus Q_{r/4, h_r},\\
b^{nj}(y) \partial_j v(y) &= 0 \quad \mbox{on } \{y_n = -h_r\} \cup \{y_n = h_r\},
\end{aligned}
\right.
\end{equation}
where the matrix $(b^{ij}(y))$ is given by
\begin{equation}\label{b_ij_formula}
(b^{ij}(y)) = \frac{(\partial_x y)(a^{ij})(\partial_x y)^t}{\det (\partial_x y)},
\end{equation}
$(\partial_x y)^t$ is the transpose of $\partial_x y$.

It is easy to see that \eqref{transformation_lipschitz} implies, using $\lambda \le (a^{ij}) \le \Lambda$,
\begin{equation}\label{b_ij_ellpticity}
\frac{\lambda}{C} \le (b^{ij}(y)) \le C\Lambda, \quad \mbox{for }y \in  Q_{2r, h_r} \setminus Q_{r/4, h_r},
\end{equation}
for some constant $C > 0$ depending only on $n, R_0, \kappa, \|f\|_{C^{2}}$ and $\|g\|_{C^{2}}$. 

In the following and throughout this section, we will denote $A \sim B$, if there exists a positive universal constant $C$, which might depend on $n,  \lambda, \Lambda, R_0, \kappa, \|f\|_{C^{2}}$, and $\|g\|_{C^{2}},$ but not depend on $\varepsilon$, such that $C^{-1} B \le A \le C B$.

From \eqref{x_to_y_1}, one can compute that
\begin{align*}
(\partial_x y)^{ii} &= 1, \quad \mbox{for } 1 \le i \le n-1,\\
(\partial_x y)^{nn} &= \frac{2h_r}{\varepsilon + f(x_0'+ y') - g(x_0' + y')},\\
(\partial_x y)^{ni} &= - \frac{2h_r \partial_i g(x_0' + y') + 2y_n [\partial_i f(x_0' + y') - \partial_i g(x_0' + y')]}{\varepsilon + f(x_0' + y')- g(x_0' + y')}, \quad \mbox{for } 1 \le i \le n-1,\\
(\partial_x y)^{ij} &= 0, \quad \mbox{for } 1 \le i \le n-1, j \neq i.
\end{align*}
By \eqref{fg_0} and \eqref{fg_1}, one can see that
$$h_r \sim \varepsilon + \left|x_0' - \frac{r}{4} \frac{x_0'}{|x_0'|}\right|^2.$$
Since $|y_n| \le h_r$, by using \eqref{fg_0} and \eqref{fg_1}, we have that, for $1 \le i \le n-1$,
\begin{align*}
\left|(\partial_x y)^{ni} \right| &\le C\frac{h_r |\partial_i g(x_0' + y')| + h_r [|\partial_i f(x_0' + y')| + |\partial_i g(x_0' + y')|]}{\varepsilon + f(x_0' + y')- g(x_0' + y')}\\
&\le C \frac{h_r}{\varepsilon + f(x_0' + y')- g(x_0' + y')} \left[ |\partial_i f(x_0' + y')| + |\partial_i g(x_0' + y')| \right]\\
&\le C \frac{\varepsilon + \left|x_0' - \frac{r}{4} \frac{x_0'}{|x_0'|}\right|^2}{\varepsilon + |x_0' + y'|^2} |x_0' + y'|, 
\end{align*}
Since $r/4 < |y'| < 2r < 2\delta^{1- \gamma}$ and $|x_0'| < \delta$, we can estimate
\begin{align*}
\left|(\partial_x y)^{ni} \right| \le C|x_0' + y'| \le C(|x_0'| + |y'|) \le C \delta^{1 - \gamma}.
\end{align*} 
Next, we will show that
\begin{equation}\label{partial_x_y_nn}
(\partial_x y)^{nn} \sim 1, \quad \mbox{for }y \in  Q_{2r, h_r} \setminus Q_{r/4, h_r}.
\end{equation}
Indeed, by \eqref{fg_0} and \eqref{fg_1}, we have
$$(\partial_x y)^{nn} = \frac{2h_r}{\varepsilon + f(x_0'+ y') - g(x_0' + y')} \sim \frac{\varepsilon + \left|x_0' - \frac{r}{4} \frac{x_0'}{|x_0'|}\right|^2}{\varepsilon + |x_0' + y'|^2}.$$
Since $|y'| > r/4$, it is easy to see
$$(\partial_x y)^{nn} \le C \frac{\varepsilon + \left|x_0' - \frac{r}{4} \frac{x_0'}{|x_0'|}\right|^2}{\varepsilon + |x_0' + y'|^2} \le C.$$
On the other hand, since $|y'| < 2r$ and $|x_0'| < r/5$, we have
\begin{align*}
\varepsilon + \left|x_0' - \frac{r}{4} \frac{x_0'}{|x_0'|}\right|^2 &\ge \varepsilon + \left( \left| \frac{r}{4} \frac{x_0'}{|x_0'|} \right| - |x_0'| \right)^2\ge \varepsilon + \left( \frac{r}{4} - \frac{r}{5} \right)^2 = \varepsilon + \frac{1}{400}r^2,
\end{align*}
and
\begin{align*}
\varepsilon + |x_0' + y'|^2 &\le \varepsilon + 2|x_0'|^2 + 2|y'|^2 \le \varepsilon + \frac{2}{25}r^2 + 8r^2 < \varepsilon + 9r^2.
\end{align*}
Therefore,
$$(\partial_x y)^{nn} \ge \frac{1}{C} \frac{\varepsilon + \left|x_0' - \frac{r}{4} \frac{x_0'}{|x_0'|}\right|^2}{\varepsilon + |x_0' + y'|^2} \ge \frac{1}{C} \frac{\varepsilon + r^2/400}{\varepsilon + 9 r^2} \ge \frac{1}{C},$$
and \eqref{partial_x_y_nn} is verified.

We have shown $(\partial_x y)^{ii} \sim 1$, for all $i = 1, \cdots, n$, and $|(\partial_x y)^{ij}| \le C \delta^{1-\gamma}$, for $i \neq j$. We further require $r_0$ to be small enough so that off-diagonal entries of $\partial_x y$ are small. Therefore \eqref{transformation_lipschitz} follows.  As mentioned earlier, \eqref{b_ij_ellpticity} follows from \eqref{transformation_lipschitz}.

Now we define, for any integer $l$,
$$A_l:= \left\{y \in \bR^n ~\big|~  \frac{r}{4} < |y'| < 2r,~ (l-1) h_r < z_n < (l+1) h_r \right\}.$$
 Note that $A_0 = Q_{2r, h_r} \setminus Q_{r/4, h_r}.$ For any $l \in \bZ$, we define a new function $\tilde{v}$ by
$$\tilde{v}(y) := v\left(y', (-1)^l\left(y_n - 2l h_r\right)\right), \quad \forall y \in A_l.$$
We also define the corresponding coefficients, for $k = 1,2, \cdots, n-1$,
$$\tilde{b}^{nk}(y)=\tilde{b}^{kn}(y) := (-1)^lb^{nk}\left(y', (-1)^l\left(y_n - 2l h_r\right)\right),  \quad \forall y \in A_l,$$
and for other indices,
$$\tilde{b}^{ij}(y) := b^{ij}\left(y', (-1)^l\left(y_n - 2l h_r\right)\right), \quad \forall y \in A_l.$$
Therefore, $\tilde{v}(y)$ and $\tilde{b}^{ij}(y)$ are defined in the infinite cylinder shell $Q_{2r, \infty} \setminus Q_{r/4, \infty}$. By \eqref{equation_v}, $\tilde{v} \in H^1(Q_{2r, \infty} \setminus Q_{r/4, \infty})$ satisfies
$$-\partial_i (\tilde{b}^{ij}(y) \partial_j \tilde{v}(y)) = 0 \quad \mbox{in }Q_{2r, \infty} \setminus Q_{r/4, \infty}.$$
Note that for any $l \in \bZ$ and $y \in A_l$, $\tilde{b}(y) = (\tilde{b}^{ij}(y))$ is orthogonally conjugated to $b\left(y', (-1)^l\left(y_n - 2l h_r\right)\right)$. Hence, by \eqref{b_ij_ellpticity}, we have
$$\frac{\lambda}{C} \le \tilde{b}(y) \le C\Lambda, \quad \mbox{for }y \in  Q_{2r, \infty} \setminus Q_{r/4, \infty}.$$

We restrict the domain to be $Q_{2r, r} \setminus Q_{r/4, r}$, and make the change of variables $z = y/r$. Set $\bar{v}(z) = \tilde{v}(y), \bar{b}^{ij}(z) = \tilde{b}^{ij}(y)$, we have
$$-\partial_i (\bar{b}^{ij}(z) \partial_j \bar{v}(z)) = 0 \quad \mbox{in }Q_{2, 1} \setminus Q_{1/4, 1},$$
and
$$\frac{\lambda}{C} \le \bar{b}(z) \le C\Lambda, \quad \mbox{for }z \in  Q_{2, 1} \setminus Q_{1/4, 1}.$$
Then by the Harnack inequality for uniformly elliptic equations of divergence form, see e.g. \cite[Theorem 8.20]{GT}, there exists a constant $C$ depending only on $n, \kappa, \lambda, \Lambda, R_0, \|f\|_{C^{2}}$ and $\|g\|_{C^{2}},$ such that
$$\sup_{Q_{1,1/2} \setminus Q_{1/2,1/2}} \bar{v} \le C \inf_{Q_{1,1/2} \setminus Q_{1/2,1/2}} \bar{v}.$$
In particular, we have
$$\sup_{Q_{1,h_r/r} \setminus Q_{1/2,h_r/r}} \bar{v} \le C \inf_{Q_{1,h_r/r} \setminus Q_{1/2,h_r/r}} \bar{v},$$
which is \eqref{harnack} after reversing the change of variables.\\
\end{proof}

\begin{remark}
If dimension $n = 2$, Lemma \ref{harnack_inequality} fails since $Q_{2, 1} \setminus Q_{1/4, 1} \subset \bR^{2}$ is the union of two disjoint rectangular domains, and the Harnack inequality cannot be applied on it. In fact, in our proof of Theorem \ref{main_thm}, Lemma \ref{harnack_inequality} is the only ingredient where dimension $n \ge 3$ is used. As mentioned above, the conclusion of Theorem \ref{main_thm} does not hold in dimension $n = 2$.\\
\end{remark}

For any domain $A \subset \widetilde{\Omega}$, we denote the oscillation of $u$ in A by $\mbox{osc}_A u := \sup_{A} u - \inf_{A} u$. Using Lemma \ref{harnack_inequality}, we obtain a decay of $\mbox{osc}_{\Omega_{x_0, \delta}}u$ in $\delta$ as follows.\\

\begin{lemma}\label{osc_u_decay_lemma}
Let $u$ be a solution of \eqref{main_problem_narrow}. For any $x_0 \in \Omega_{0, r_0}$, where $r_0$ is as in Lemma \ref{harnack_inequality}, there exist positive constants $\sigma$ and $C$, depending only on $ n, R_0, \kappa, \|f\|_{C^{2}}$ and $\|g\|_{C^{2}},$ such that
\begin{equation}\label{osc_u}
\mbox{osc}_{\Omega_{x_0, \delta}} u \le C \| u\|_{L^\infty(\Omega_{x_0, \delta^{1 - \gamma}})} \delta^{\gamma \sigma}.
\end{equation}
\end{lemma}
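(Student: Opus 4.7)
The plan is to deduce the oscillation decay from a dyadic iteration of the Harnack inequality in Lemma \ref{harnack_inequality}, in the standard style of De Giorgi--Moser. Write $M(r) := \sup_{\Omega_{x_0,r}} u$, $m(r) := \inf_{\Omega_{x_0,r}} u$, and $\omega(r) := M(r) - m(r)$.

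First I would record a maximum principle observation: since $u$ satisfies the divergence-form equation with the homogeneous conormal condition on $\Gamma_+ \cup \Gamma_-$, the strong maximum principle together with Hopf's lemma (applied to the conormal derivative) shows that the extrema of $u$ on $\overline{\Omega_{x_0,r}}$ are attained on the lateral face $\{|x' - x_0'| = r\}\cap\widetilde{\Omega}$. That face lies in $\overline{\Omega_{x_0,r}\setminus\Omega_{x_0,r/2}}$, so
\[
\sup_{\overline{\Omega_{x_0,r}\setminus\Omega_{x_0,r/2}}} u = M(r), \qquad \inf_{\overline{\Omega_{x_0,r}\setminus\Omega_{x_0,r/2}}} u = m(r).
\]

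Next, for $r$ in the admissible range $5|x_0'| < r < \delta^{1-\gamma}/2$, apply Lemma \ref{harnack_inequality} to the nonnegative solutions $u - m(2r)$ and $M(2r) - u$ on $\Omega_{x_0,2r}\setminus\Omega_{x_0,r/4}$. Combined with the previous step this yields
\[
M(r) - m(2r) \le C\bigl(m(r) - m(2r)\bigr), \qquad M(2r) - m(r) \le C\bigl(M(2r) - M(r)\bigr).
\]
Adding and rearranging gives the contraction $\omega(r) \le \theta\,\omega(2r)$ with $\theta := (C-1)/(C+1) \in (0,1)$, independent of $r$.

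Then I would iterate. Set $\sigma := -\log_2\theta > 0$ and $r_k := 2^{-k}\delta^{1-\gamma}$; choose $K$ to be the largest integer for which $r_K > \max(\delta,\,5|x_0'|)$, so that the contraction is legitimate at every scale $r_1,\dots,r_K$. This produces
\[
\omega(r_K) \le \theta^K\,\omega(\delta^{1-\gamma}) \le 2\,\theta^K\,\|u\|_{L^\infty(\Omega_{x_0,\delta^{1-\gamma}})}, \qquad \theta^K = \bigl(r_K/\delta^{1-\gamma}\bigr)^{\sigma}.
\]
Since $r_K \le 2\max(\delta,\,5|x_0'|)$ and $|x_0'| \le \delta$, we have $r_K \le C\delta$, which gives both $\Omega_{x_0,\delta}\subset\Omega_{x_0,r_K}$ (so $\mathrm{osc}_{\Omega_{x_0,\delta}} u \le \omega(r_K)$) and $\theta^K \le C\,\delta^{\gamma\sigma}$, yielding \eqref{osc_u}.

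The main delicate point is the regime $|x_0'| \ge \delta/5$, where one cannot iterate all the way down to scale $\delta$ because Lemma \ref{harnack_inequality} only applies for $r > 5|x_0'|$. The resolution is simply that the iteration can still be carried down to $r \sim |x_0'|$, and since $|x_0'|\le\delta$ the bound $(r_K/\delta^{1-\gamma})^\sigma \le C\delta^{\gamma\sigma}$ still holds; this is the sole place where one must check that no power of $\delta$ is lost.
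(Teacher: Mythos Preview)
Your argument is correct and essentially identical to the paper's: the paper also applies Lemma~\ref{harnack_inequality} to $\sup_{\Omega_{2r}}u - u$ and $u - \inf_{\Omega_{2r}}u$, invokes the maximum principle to pass from the annulus to the full $\Omega_r$, adds to obtain the contraction $\mathrm{osc}_{\Omega_r}u \le \tfrac{C-1}{C+1}\,\mathrm{osc}_{\Omega_{2r}}u$, and iterates dyadically from scale $\delta^{1-\gamma}/2$ down to a scale comparable to $\delta$. Your explicit treatment of the stopping scale $r_K \sim \max(\delta,5|x_0'|)$ and the observation that $|x_0'|\le\delta$ guarantees no loss of the factor $\delta^{\gamma\sigma}$ is slightly more detailed than the paper (which simply stops at $r_k\in[5\delta,10\delta)$, automatically above $5|x_0'|$), but the content is the same.
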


\begin{proof}
For simplicity, we drop the $x_0$ subscript and denote $\Omega_r = \Omega_{x_0,r}$ in this proof. Let $5|x_0'| < r < \delta^{1- \gamma}$ and $u_1 = \sup_{\Omega_{2r}}u - u, u_2 = u - \inf_{\Omega_{2r}} u.$ By Lemma \ref{harnack_inequality}, we have 
\begin{align*}
\sup_{\Omega_r \setminus \Omega_{r/2}} u_1 &\le C_1 \inf_{\Omega_r \setminus \Omega_{r/2}} u_1, \\
\sup_{\Omega_r \setminus \Omega_{r/2}} u_2 &\le C_1 \inf_{\Omega_r \setminus \Omega_{r/2}} u_2,
\end{align*}
where $C_1 > 1$ is a constant independent of $r$. Since both $u_1$ and $u_2$ satisfy equation \eqref{main_problem_narrow}, by the maximum principle,
\begin{align*}
\sup_{\Omega_r \setminus \Omega_{r/2}} u_i = \sup_{\Omega_r} u_i, \quad \inf_{\Omega_r \setminus \Omega_{r/2}} u_i = \inf_{\Omega_r} u_i,
\end{align*}
for $i = 1,2$. Therefore,
\begin{align*}
\sup_{\Omega_r} u_1 &\le C_1 \inf_{\Omega_r} u_1, \\
\sup_{\Omega_r} u_2 &\le C_1 \inf_{\Omega_r} u_2.
\end{align*}
Adding up the above two inequalities, we have
$$\mbox{osc}_{\Omega_r} u \le \left( \frac{C_1 - 1}{C_1 + 1} \right)\mbox{osc}_{\Omega_{2r}} u.$$
Now we take $\sigma > 0$ such that $2^{-\sigma} = \frac{C_1 - 1}{C_1 + 1}$, then
\begin{equation}\label{recurrence}
\mbox{osc}_{\Omega_r} u  \le 2^{-\sigma} \mbox{osc}_{\Omega_{2r}} u.
\end{equation}
We start with $r = r_0 = \delta^{1- \gamma}/2$, and set $r_{i+1} = r_i/2$.
Keep iterating \eqref{recurrence} $k+1$ times, where $k$ satisfies $5\delta \le r_k < 10 \delta$, we will have
$$\mbox{osc}_{\Omega_{\delta}} u \le \mbox{osc}_{\Omega_{r_k}} u \le  2^{-(k+1)\sigma} \mbox{osc}_{\Omega_{2r_0}} u \le 2^{1-(k+1)\sigma} \|u\|_{L^\infty (\Omega_{\delta^{1-\gamma}})}.$$
Since $10\delta > r^{k} = 2^{-k}r_0 = 2^{-(k+1)}\delta^{1 - \gamma},$ we have $ 2^{-(k+1)} < 10 \delta^\gamma$, and hence \eqref{osc_u} follows immediately.\\
\end{proof}

\begin{proof}[Proof of Theorem \ref{main_thm}]
Let $u \in H^1(\Omega_{0,R_0})$ be a solution of \eqref{main_problem_narrow}. For $x_0 \in \Omega_{0,r_0}$, we have, using Lemma \ref{osc_u_decay_lemma},
\begin{equation}\label{u-u_0}
\| u - u_0\|_{L^\infty(\Omega_{x_0, \delta})} \le C \| u\|_{L^\infty(\Omega_{x_0, \delta^{1 - \gamma}})} \delta^{\gamma \sigma},
\end{equation}
for some constant $u_0$. We denote $v: = u - u_0$, and $v$ satisfies the same equation \eqref{main_problem_narrow}. We work on the domain $\Omega_{x_0,  \delta/4}$, and perform a change of variables by setting
\begin{equation}\label{x_to_y}
\begin{cases}
y' = \delta^{-1} (x'- x_0'),\\
y_n = \delta^{-1} x_n.
\end{cases}
\end{equation}
The domain $\Omega_{x_0, \delta/4}$ becomes
\begin{align*}
\left\{y\in \bR^n ~\big|~ |y'| \le \frac{1}{4}, \delta^{-1} \left( -\frac{1}{2}\varepsilon+ g(x_0' + \delta y')\right)< y_n < \delta^{-1} \left( \frac{1}{2}\varepsilon+ f(x_0'+ \delta y')\right) \right\}.
\end{align*}
We make a change of variables again by
\begin{equation}\label{y_to_z}
\begin{cases}
z' = 4y' ,\\
z_n = 2\delta \left( \frac{\delta y_n - g(x_0' + \delta y') + \varepsilon/2}{\varepsilon + f(x_0' + \delta y') - g(x_0' + \delta y')} - \frac{1}{2} \right).
\end{cases}
\end{equation}
Now the domain in $z$-variables becomes a thin plate $Q_{1, \delta}$, where $Q_{s,t}$ is defined as in \eqref{Q_s_t}. Let $w(z) = v(x)$, then $w$ satisfies
\begin{equation}\label{equation_w}
\left\{
\begin{aligned}
-\partial_i(b^{ij}(z) \partial_j w(z)) &=0 \quad \mbox{in } Q_{1, \delta},\\
b^{nj}(z) \partial_j w(z) &= 0 \quad \mbox{on } \{z_n = -\delta\} \cup \{z_n = \delta\},
\end{aligned}
\right.
\end{equation}
where the matrix $b(z) = (b^{ij}(z))$ is given by
\begin{equation}\label{b_ij_formula_2}
(b^{ij}(z)) = \frac{(\partial_y z)(a^{ij})(\partial_y z)^t}{\det (\partial_y z)}.
\end{equation}

Similar to the proof of Lemma \ref{harnack_inequality}, we will show that the Jacobian matrix of the change of variables \eqref{y_to_z}, denoted by $\partial_y z$, and its inverse matrix $\partial_z y$ satisfy
\begin{equation}\label{transformation_lipschitz_2}
|(\partial_y z)^{ij}| \le C, \quad |(\partial_z y)^{ij}| \le C, \quad \mbox{for }z \in  Q_{1, \delta},
\end{equation}
where $C > 0$ depends only on $n, \kappa, R_0, \|f\|_{C^{2}}$ and $\|g\|_{C^{2}}$. This leads to
\begin{equation}\label{b_ij_ellpticity_2}
\frac{\lambda}{C} \le b(z) \le C\Lambda, \quad \mbox{for }z \in  Q_{1, \delta}.
\end{equation}
From \eqref{y_to_z}, one can compute that
\begin{align*}
(\partial_y z)^{ii} &= 4, \quad \mbox{for } 1 \le i \le n-1,\\
(\partial_y z)^{nn} &= \frac{2\delta^2}{\varepsilon + f(x_0' + \delta z'/4) - g(x_0' + \delta z'/4)},\\
(\partial_y z)^{ni} &= - \frac{2 \delta^2 \partial_i g(x_0' + \delta z'/4) + 2z_n \delta [\partial_i f(x_0' + \delta z'/4) - \partial_i g(x_0' + \delta z'/4)]}{\varepsilon + f(x_0' + \delta z'/4)- g(x_0' + \delta z'/4)}\\
&~\quad \mbox{for } 1 \le i \le n-1,\\
(\partial_y z)^{ij} &= 0, \quad \mbox{for } 1 \le i \le n-1, j \neq i.
\end{align*}
First we will show that
\begin{equation}\label{partial_y_z_nn}
(\partial_y z)^{nn} \sim 1, \quad \mbox{for }z \in Q_{1, \delta}.
\end{equation}
Since $|z'| < 1$ and $|x_0'| < \delta$, it is easy to see that
$$(\partial_y z)^{nn} \sim \frac{\delta^2}{\varepsilon + |x_0' + \delta z'/4|^2} \ge \frac{\delta^2}{\varepsilon + C \delta^2} \ge \frac{1}{C}, \quad \mbox{for }z \in Q_{1, \delta},$$
due to \eqref{fg_0} and \eqref{fg_1}. On the other hand,
\begin{align*}
(\partial_y z)^{nn} &\sim \frac{\delta^2}{\varepsilon + |x_0' + \delta z'/4|^2}\\
&= \frac{\delta^2}{\varepsilon + |x_0'|^2 + (1/4)^2 \delta^2 |z'|^2 + \delta x_0' \cdot z'/2}\\
&\le \frac{\delta^2}{\delta^2 + (1/4)^2|z'|^2\delta^2 - |z'||x_0'|\delta/2}\\
&\le \frac{\delta^2}{(1 + (1/4)^2|z'|^2 - 1/2) \delta^2} \le C, \quad \mbox{for }z \in Q_{1, \delta}.
\end{align*}
Therefore, \eqref{partial_y_z_nn} is verified.

Since $|z_n| < \delta$, $|z'| < 1$ and $|x_0'| < \delta$, by \eqref{fg_0} and \eqref{fg_1}, for $1 \le i \le n-1$,
\begin{align*}
|(\partial_y z)^{ni}| &\le \frac{2 \delta^2 |\partial_i g(x_0' + \delta z'/4)| + 2 \delta^2 [|\partial_i f(x_0' + \delta z'/4)| + |\partial_i g(x_0' + \delta z'/4)|]}{\varepsilon + f(x_0' + \delta z'/4)- g(x_0' + \delta z'/4)}\\
&\le \frac{C\delta^2}{\varepsilon + f(x_0' + \delta z'/4) - g(x_0' + \delta z'/4)}[|\partial_i f(x_0' + \delta z'/4)| + |\partial_i g(x_0' + \delta z'/4)|]\\
&\le C\frac{\delta^2}{\varepsilon + |x_0' + \delta z'/4|^2} |x_0' + \delta z'/4|\\
&\le C (|x_0'| + \delta|z'|) \le C\delta,
\end{align*}
where in the last line, we have used the same arguments in showing $(\partial_y z)^{nn} \le C$ earlier.

We have shown $(\partial_y z)^{ii} \sim 1$, for all $i = 1, \cdots, n$, and $|(\partial_y z)^{ij}| \le C \delta$, for $i \neq j$. We further require $r_0$ to be small enough so that off-diagonal entries are small. Therefore \eqref{transformation_lipschitz_2} follows.  As mentioned earlier, \eqref{b_ij_ellpticity_2} follows from \eqref{transformation_lipschitz_2}.

Next, we will show
\begin{equation}\label{b_ij_holder}
\|b \|_{C^\alpha(\overline{Q}_{1,\delta})} \le C,
\end{equation}
for some $C > 0$ depending only on $n, \kappa, R_0, \|a\|_{C^\alpha}, \|f\|_{C^{2}}$ and $\|g\|_{C^{2}}$, by showing
\begin{equation}\label{partial_y_z_lipschitz}
|\nabla_z (\partial_y z)^{ij}(z)| \le C, \quad \left| \nabla_z \frac{1}{\det(\partial_y z)} \right| \le C,  \quad \mbox{for }z \in Q_{1, \delta}.
\end{equation}
Then \eqref{b_ij_holder} follows from \eqref{partial_y_z_lipschitz}, \eqref{b_ij_formula_2}, and $\|a\|_{C^\alpha} \le C$.

By a straightforward computation, we have, for any $i = 1, \cdots, n-1$,
\begin{align*}
\left| \partial_{z_i} \frac{1}{\det(\partial_y z)} \right| &= \left| \partial_{z_i} \left( \frac{\varepsilon + f(x_0' + \delta z'/4) - g(x_0' + \delta z'/4)}{2 \cdot 4^{n-1}\delta^2} \right) \right|\\
&= \left| \frac{\delta[\partial_i f(x_0' + \delta z'/4) - \partial_i g(x_0' + \delta z'/4)]}{2 \cdot 4^{n-1}\delta^2}  \right|\\
&\le \frac{C}{\delta}[|\partial_i f(x_0' + \delta z'/4)| + |\partial_i g(x_0' + \delta z'/4)|]\\
&\le \frac{C}{\delta}|x_0' + \delta z'/4| \le C, \quad \mbox{for }z \in Q_{1, \delta},
\end{align*}
where in the last inequality, \eqref{fg_0} and \eqref{fg_1} have been used. For any $i = 1, \cdots, n-1$,
\begin{align*}
|\partial_{z_i} (\partial_y z)^{nn}| &=  \left| \frac{2\delta^3 [\partial_i f(x_0' + \delta z'/4) - \partial_i g(x_0' + \delta z'/4)]}{(\varepsilon + f(x_0' + \delta z'/4) - g(x_0' + \delta z'/4))^2} \right|\\
&\le \frac{C\delta^3}{(\varepsilon + |x_0' + \delta z'/4|^2)^2}|x_0' + \delta z'/4|\\
&\le \frac{C\delta^3}{\delta^4}(|x_0'| + |\delta z'|) \le C, \quad \mbox{for }z \in Q_{1, \delta},
\end{align*}
where in the last line, we have used the same arguments in showing $(\partial_y z)^{nn} \le C$ earlier. Similar computations apply to $\partial_{z_i} (\partial_y z)^{ni}$, for $i = 1, \cdots, n-1$, and we have
$$|\partial_{z_i} (\partial_y z)^{ni}| \le C, \quad \mbox{for }z \in Q_{1, \delta}.$$
Finally, we compute, for $i = 1, \cdots, n-1$,
\begin{align*}
|\partial_{z_n} (\partial_y z)^{ni}| &= \left| \frac{ 2 \delta [\partial_i f(x_0' + \delta z'/4) - \partial_i g(x_0' + \delta z'/4)]}{\varepsilon + f(x_0' + \delta z'/4)- g(x_0' + \delta z'/4)}  \right|\\
&\le \frac{C\delta|x_0' + \delta z'/4|}{\varepsilon + |x_0' + \delta z'/4|^2} \le C, \quad \mbox{for }z \in Q_{1, \delta}.
\end{align*}
Therefore, \eqref{partial_y_z_lipschitz} is verified, and hence \eqref{b_ij_holder} follows as mentioned above.

Now we define
$$S_l:= \left\{z \in \bR^n ~\big|~  |z'| < 1,~ (l-1) \delta < z_n < (l+1) \delta \right\}$$
for any integer $l$, and
$$S: = \left\{z \in \bR^n ~\big|~  |z'| < 1,~ |z_n| < 1\right\}.$$
Note that $Q_{1, \delta} = S_0$. As in the proof of Lemma \ref{harnack_inequality}, we define, for any $l \in \bZ$, a new function $\tilde{w}$ by setting
$$\tilde{w}(z) := w\left(z', (-1)^l\left(z_n - 2l \delta\right)\right), \quad \forall z \in S_l.$$
We also define the corresponding coefficients, for $k = 1,2, \cdots, n-1$,
$$\tilde{b}^{nk}(z)=\tilde{b}^{kn}(z) := (-1)^lb^{nk}\left(z', (-1)^l\left(z_n - 2l \delta\right)\right),  \quad \forall z \in S_l,$$
and for other indices,
$$\tilde{b}^{ij}(z) := b^{ij}\left(z', (-1)^l\left(z_n - 2l \delta\right)\right), \quad \forall y \in S_l.$$
Then $\tilde{w}$ and $\tilde{b}^{ij}$ are defined in the infinite cylinder $Q_{1, \infty}$. By \eqref{equation_w}, $\tilde{w}$ satisfies the equation
$$-\partial_i (\tilde{b}^{ij} \partial_j \tilde{w}) = 0, \quad \mbox{in }Q_{1, \infty}.$$
Note that for any $l \in \bZ$, $\tilde{b}(z)$ is orthogonally conjugated to $b\left(z', (-1)^l\left(z_n - 2l \delta\right)\right),$ for $z \in S_l$. Hence, by \eqref{b_ij_ellpticity_2}, we have
\begin{equation*}
\frac{\lambda}{C} \le \tilde{b}(z) \le C\Lambda, \quad \mbox{for }z \in Q_{1,\infty},
\end{equation*}
and, by \eqref{b_ij_holder},
\begin{equation*}
\|\tilde{b} \|_{C^\alpha(\overline{S}_{l})} \le C, \quad \forall l \in \bZ.
\end{equation*}
Apply Lemma \ref{gradient_lemma} on $S$ with $N = 1$, we have
$$\| \nabla \tilde{w} \|_{L^\infty(\frac{1}{2}S)} \le C \| \tilde{w} \|_{L^2(S)}.$$
It follows that
$$\| \nabla w \|_{L^\infty(Q_{1/2, \delta})} \le \frac{C}{\delta} \| w \|_{L^2(Q_{1, \delta})} \le C\|w\|_{L^\infty(Q_{1, \delta})},$$
for some positive constant $C$, depending only on $n, \alpha, R_0, \kappa, \lambda, \Lambda, \|a\|_{C^\alpha}, \|f\|_{C^{2}}$ and $\|g\|_{C^{2}}$.

Since $\|(\partial_z y)\|_{L^\infty(Q_{1,\delta})} \le C$ by \eqref{transformation_lipschitz_2}, where $C$ depends only on $R_0, \kappa,  \|f\|_{C^{2}}$ and $\|g\|_{C^{2}}$, and in particular, is independent of $\varepsilon$ and $\delta$. Reversing the change of variables \eqref{y_to_z} and \eqref{x_to_y}, we have
$$\delta \| \nabla v\|_{L^\infty(\Omega_{x_0, \delta/8})} \le C \|v\|_{L^\infty(\Omega_{x_0, \delta/4})} \le C \| u\|_{L^\infty(\Omega_{x_0, \delta^{1 - \gamma}})} \delta^{\gamma \sigma}$$
by \eqref{u-u_0}. In particular, this implies
$$|\nabla u(x_0)| \le C \| u\|_{L^\infty(\Omega_{x_0, \delta^{1 - \gamma}})} \delta^{-1 + \gamma \sigma},$$
and it concludes the proof of Theorem \ref{main_thm} after taking $\beta = \gamma \sigma/2$.\\
\end{proof}

\section{Gradient estimates of elliptic systems}

A natural question is whether the estimate in Theorem \ref{main_thm} can be extended to elliptic systems of divergence form. We tend to believe that the answer to this question is affirmative, and plan to pursue this in a subsequent paper. Following closely the proof of \eqref{insulated_upper_bound} in \cite{BLY2}, we give a preliminary gradient estimate of elliptic systems in this section.

We consider the vector-valued function $u = (u_1, \cdots, u_N)$, and for $1 \le \alpha, \beta \le n, 1 \le i,j \le N$, let $A^{\alpha \beta}_{ij}(x)$ be a function such that
$$\| A^{\alpha \beta}_{ij} \|_{L^\infty(\Omega_{0,R_0})} \le \Lambda,$$
$$\int_{\Omega_{0,R_0}} A^{\alpha \beta}_{ij}(x) \partial_\alpha \varphi_i(x) \partial_\beta \varphi_j(x) \ge \lambda \int_{\Omega_{0,R_0}} |\nabla \varphi|^2, \quad \forall \varphi \in H_0^1(\Omega_{0,R_0}; \bR^N),$$
for some $\lambda, \Lambda > 0$, where $\Omega_{0,R_0}$ is defined as in \eqref{domain_def_Omega}.
We assume $A^{\alpha \beta}_{ij}(x) \in C^\mu(\Omega_{0,R_0})$ for some $\mu \in(0,1)$, and consider the system
\begin{equation}\label{system}
\left\{
\begin{aligned}
-\partial_\alpha \left(A^{\alpha \beta}_{ij}(x) \partial_\beta u_j(x)\right) &=0 \quad \mbox{in }\Omega_{0,R_0},\\
A^{\alpha \beta}_{ij}(x) \partial_\beta u_j(x) \nu_\alpha(x) &= 0 \quad \mbox{on } \Gamma_+ \cup \Gamma_-,\\
\end{aligned}
\right.
\end{equation}
for $i = 1,\cdots, N$, where $\Gamma_+, \Gamma_-$ are defined as in \eqref{domain_def_Omega}, $\nu = (\nu_1, \cdots, \nu_n)$ denotes the unit normal vector on $\Gamma_+$ and $\Gamma_-$, pointing upward and downward respectively. We have the following gradient estimate by essentially following the proof of Theorem 1.2 in \cite{BLY2}.\\

\begin{theorem}\label{system_thm}
Let $u\in H_0^1(\Omega_{0,R_0}; \bR^N)$ be a solution to \eqref{system} in dimension $n \ge 2$, with the coefficient $A^{\alpha \beta}_{ij}$ defined as above. There exist positive constants $r_0$ and $C$ depending only on $n$, $\lambda$, $\Lambda$, $R_0$, $\kappa$, $\mu$, $\|A\|_{C^\mu(\Omega_{0,R_0})}$, $\|f\|_{C^{2}(\{|x'| \le R_0\})}$ and $\|g\|_{C^{2}(\{|x'| \le R_0\})},$ such that
\begin{equation}\label{main_result_system}
|\nabla u (x_0)| \le C  \| u\|_{L^\infty(\Omega_{0,R_0})} \left(\varepsilon + |x_0'|^2 \right) ^{-1/2},
\end{equation}
for all $\varepsilon \in (0,1)$, $x_0 \in \Omega_{0 , r_0}$.\\
\end{theorem}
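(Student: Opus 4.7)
The plan is to follow the three-step strategy from the proof of Theorem \ref{main_thm}, but omit the oscillation decay step (Lemma \ref{osc_u_decay_lemma}), which is unavailable for systems. Fix $x_0 \in \Omega_{0,r_0}$ with $r_0$ chosen as in Theorem \ref{main_thm}, and set $\delta := (\varepsilon + |x_0'|^2)^{1/2}$. Restricting $u$ to $\Omega_{x_0, \delta/4}$ and performing the two changes of variables \eqref{x_to_y} and \eqref{y_to_z}, we obtain a vector-valued function $w(z) = u(x)$ on the thin plate $Q_{1,\delta}$ solving a system
\[
-\partial_\alpha\bigl(B^{\alpha\beta}_{ij}(z) \partial_\beta w_j\bigr) = 0 \quad \text{in } Q_{1,\delta}, \qquad B^{n\beta}_{ij} \partial_\beta w_j = 0 \quad \text{on } \{z_n = \pm\delta\},
\]
where the new coefficients $B^{\alpha\beta}_{ij}(z)$ are given by a tensorial analogue of \eqref{b_ij_formula_2} applied entry by entry.

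The coefficient estimates transfer verbatim from the scalar case: the bounds \eqref{transformation_lipschitz_2} and \eqref{partial_y_z_lipschitz} on the Jacobian of \eqref{y_to_z} depend only on the geometry of the change of variables, not on the structure of the equation. They yield uniform ellipticity $\tfrac{\lambda}{C} \le B(z) \le C\Lambda$ on $Q_{1,\delta}$ together with the $\delta$-independent bound $\|B\|_{C^\mu(\overline{Q}_{1,\delta})} \le C$, where $C$ depends only on $n$, $\kappa$, $R_0$, $\|A\|_{C^\mu}$, $\|f\|_{C^2}$ and $\|g\|_{C^2}$.

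Next, extend $w$ to the full cylinder $Q_{1,\infty}$ by repeatedly reflecting across each hyperplane $\{z_n = (2l+1)\delta\}$, $l \in \bZ$, exactly as in the proof of Theorem \ref{main_thm}: define $\tilde{w}(z) := w(z', (-1)^l(z_n - 2l\delta))$ on $S_l$, and flip the sign in those entries $\tilde{B}^{\alpha\beta}_{ij}$ where exactly one of $\alpha,\beta$ equals $n$, so the conormal condition becomes an interior transmission condition. Then $\tilde{w}$ is a weak solution of the uniformly elliptic system $-\partial_\alpha(\tilde{B}^{\alpha\beta}_{ij} \partial_\beta \tilde{w}_j) = 0$ on $Q_{1,\infty}$, with $\tilde{B}$ piecewise $C^\mu$ on the slabs $S_l$ with uniform bounds. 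Applying Lemma \ref{gradient_lemma} with $H = G = 0$ on the unit box $S \subset Q_{1,\infty}$---whose constant, crucially, is independent of how many slabs intersect $S$---yields
\[
\|\nabla \tilde{w}\|_{L^\infty(\tfrac{1}{2}S)} \le C \|\tilde{w}\|_{L^2(S)} \le C \|u\|_{L^\infty(\Omega_{0,R_0})}.
\]
Reversing the two changes of variables gives $|\nabla u(x_0)| \le C\delta^{-1} \|u\|_{L^\infty(\Omega_{0,R_0})}$, which is \eqref{main_result_system}.

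The absence of an improvement $\beta$ reflects the only real obstacle in transferring Theorem \ref{main_thm} to systems: the De Giorgi--Nash--Moser Harnack inequality, which underlies Lemma \ref{harnack_inequality} and hence the oscillation decay of Lemma \ref{osc_u_decay_lemma}, is generally false for elliptic systems. Consequently one cannot replace $\|u\|_{L^\infty(\Omega_{0,R_0})}$ by a quantity decaying like $\delta^{\gamma\sigma}$ in the last step, and we recover only the unimproved rate $\delta^{-1}$.
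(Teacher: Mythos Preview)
Your proposal is correct and follows essentially the same approach as the paper: perform the changes of variables \eqref{x_to_y} and \eqref{y_to_z}, verify the transformed coefficients $B^{\alpha\beta}_{ij}$ inherit uniform ellipticity and the $C^\mu$ bound from the purely geometric estimates \eqref{transformation_lipschitz_2}--\eqref{partial_y_z_lipschitz}, extend by even reflection, apply Lemma~\ref{gradient_lemma}, and undo the scaling to pick up the factor $\delta^{-1}$. Your explicit remark that the missing improvement $\beta$ stems from the failure of the Harnack inequality for systems (and hence of Lemma~\ref{osc_u_decay_lemma}) is exactly the point the paper alludes to when it says the argument ``follows closely'' that of Theorem~\ref{main_thm} while omitting the oscillation step.
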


\begin{remark}
The elliptic systems we have considered include the linear systems of elasticity:  $n = N$, and the coefficients $A^{\alpha \beta}_{ij}$ satisfy
$$A^{\alpha \beta}_{ij} = A^{\beta \alpha}_{ji} = A^{i \beta}_{\alpha j},$$
and for all $n \times n$ symmetric matrices $\{\xi_{\alpha}^i\}$,
$$\lambda |\xi|^2 \le A^{\alpha \beta}_{ij}\xi_{\alpha}^i \xi_{\beta}^j \le \Lambda|\xi|^2.$$\\
\end{remark}

\begin{proof}[Proof of Theorem \ref{system_thm}]
Let $u\in H^1(\Omega_{0,R_0}; \bR^N)$ be a solution to \eqref{system}. We perform the changes of variables \eqref{x_to_y} and \eqref{y_to_z}. For any $1 \le i,j \le N$, we define
$$B_{ij}^{\alpha \beta}(z) = \frac{(\partial_y z)(A^{\alpha \beta}_{ij})(\partial_y z)^t}{\det (\partial_y z)},$$
and let $v(z) = u(x)$. Then $v$ satisfies
\begin{equation*}
\left\{
\begin{aligned}
-\partial_\alpha \left(B^{\alpha \beta}_{ij}(z) \partial_\beta v_j(z)\right) &=0 \quad \mbox{in } Q_{1, \delta},\\
B^{n \beta}_{ij}(z) \partial_\beta v_j(z) &= 0 \quad \mbox{on } \{z_n = -\delta\} \cup \{z_n = \delta\},
\end{aligned}
\right.
\end{equation*}
for $i = 1,\cdots, N$, where $Q_{s,t}$ is defined as in \eqref{Q_s_t}. As in the proof of Theorem \ref{main_thm}, we can show that
$$\| B^{\alpha \beta}_{ij} \|_{L^\infty(Q_{1,\delta})} \le C\Lambda, \quad \| B^{\alpha \beta}_{ij} \|_{C^\mu(\bar{Q}_{1,\delta})} \le C,$$
$$\int_{Q_{1,\delta}} B^{\alpha \beta}_{ij}(z) \partial_\alpha \varphi_i(z) \partial_\beta \varphi_j(z) \ge \frac{\lambda}{C} \int_{Q_{1,\delta}} |\nabla \varphi|^2, \quad \forall \varphi \in H_0^1(Q_{1,\delta}; \bR^N),$$
where $C$ is a positive constant that depends only on $n$, $N$, $\mu$, $R_0$, $\kappa$, $\lambda$, $\Lambda$, $\|A\|_{C^\mu}$, $\|f\|_{C^{2}}$ and $\|g\|_{C^{2}}$. Then we argue as in the proof of Theorem \ref{main_thm} to obtain
$$|\nabla v(0)| \le C\|v\|_{L^\infty(Q_{1, \delta})},$$
which is \eqref{main_result_system} after reversing the changes of variables \eqref{x_to_y} and \eqref{y_to_z}.\\
\end{proof}


\begin{thebibliography}{10}

\bibitem{ACKLY}
H.~Ammari, G.~Ciraolo, H.~Kang, H.~Lee, and K.~Yun.
\newblock Spectral analysis of the {N}eumann-{P}oincar\'{e} operator and
  characterization of the stress concentration in anti-plane elasticity.
\newblock {\em Arch. Ration. Mech. Anal.}, 208(1):275--304, 2013.

\bibitem{ADY}
H.~Ammari, B.~Davies, and S.~Yu.
\newblock Close-to-touching acoustic subwavelength resonators: eigenfrequency
  separation and gradient blow-up.
\newblock {\em Multiscale Model. Simul.}, 18(3):1299--1317, 2020.

\bibitem{AKLLL}
H.~Ammari, H.~Kang, H.~Lee, J.~Lee, and M.~Lim.
\newblock Optimal estimates for the electric field in two dimensions.
\newblock {\em J. Math. Pures Appl. (9)}, 88(4):307--324, 2007.

\bibitem{AKL}
H.~Ammari, H.~Kang, and M.~Lim.
\newblock Gradient estimates for solutions to the conductivity problem.
\newblock {\em Math. Ann.}, 332(2):277--286, 2005.

\bibitem{BASL}
I.~Babu\v{s}ka, B.~Andersson, P.J. Smith, and K.~Levin.
\newblock Damage analysis of fiber composites. {I}. {S}tatistical analysis on
  fiber scale.
\newblock {\em Comput. Methods Appl. Mech. Engrg.}, 172(1-4):27--77, 1999.

\bibitem{BLY1}
E.~Bao, Y.Y. Li, and B.~Yin.
\newblock Gradient estimates for the perfect conductivity problem.
\newblock {\em Arch. Ration. Mech. Anal.}, 193(1):195--226, 2009.

\bibitem{BLY2}
E.~Bao, Y.Y. Li, and B.~Yin.
\newblock Gradient estimates for the perfect and insulated conductivity
  problems with multiple inclusions.
\newblock {\em Comm. Partial Differential Equations}, 35(11):1982--2006, 2010.

\bibitem{BLL}
J.G. Bao, H.G. Li, and Y.Y. Li.
\newblock Gradient estimates for solutions of the {L}am\'{e} system with
  partially infinite coefficients.
\newblock {\em Arch. Ration. Mech. Anal.}, 215(1):307--351, 2015.

\bibitem{BLL2}
J.G. Bao, H.G. Li, and Y.Y. Li.
\newblock Gradient estimates for solutions of the {L}am\'{e} system with
  partially infinite coefficients in dimensions greater than two.
\newblock {\em Adv. Math.}, 305:298--338, 2017.

\bibitem{BT1}
E.~Bonnetier and F.~Triki.
\newblock Pointwise bounds on the gradient and the spectrum of the
  {N}eumann-{P}oincar\'{e} operator: the case of 2 discs.
\newblock In {\em Multi-scale and high-contrast {PDE}: from modelling, to
  mathematical analysis, to inversion}, volume 577 of {\em Contemp. Math.},
  pages 81--91. Amer. Math. Soc., Providence, RI, 2012.

\bibitem{BT2}
E.~Bonnetier and F.~Triki.
\newblock On the spectrum of the {P}oincar\'{e} variational problem for two
  close-to-touching inclusions in 2{D}.
\newblock {\em Arch. Ration. Mech. Anal.}, 209(2):541--567, 2013.

\bibitem{BV}
E.~Bonnetier and M.~Vogelius.
\newblock An elliptic regularity result for a composite medium with
  ``touching'' fibers of circular cross-section.
\newblock {\em SIAM J. Math. Anal.}, 31(3):651--677, 2000.

\bibitem{BudCar}
B.~Budiansky and G.~F. Carrier.
\newblock {High Shear Stresses in Stiff-Fiber Composites}.
\newblock {\em Journal of Applied Mechanics}, 51(4):733--735, 1984.

\bibitem{CY}
Yves Capdeboscq and S.C. Yang~Ong.
\newblock Quantitative {J}acobian determinant bounds for the conductivity
  equation in high contrast composite media.
\newblock {\em Discrete Contin. Dyn. Syst. Ser. B}, 25(10):3857--3887, 2020.

\bibitem{DL}
H.~Dong and H.G. Li.
\newblock Optimal estimates for the conductivity problem by {G}reen's function
  method.
\newblock {\em Arch. Ration. Mech. Anal.}, 231(3):1427--1453, 2019.

\bibitem{DZ}
H.~Dong and H.~Zhang.
\newblock On an elliptic equation arising from composite materials.
\newblock {\em Arch. Ration. Mech. Anal.}, 222(1):47--89, 2016.

\bibitem{GT}
D.~Gilbarg and N.~Trudinger.
\newblock {\em Elliptic partial differential equations of second order}.
\newblock Classics in Mathematics. Springer-Verlag, Berlin, 2001.

\bibitem{Gor}
Y.~Gorb.
\newblock Singular behavior of electric field of high-contrast concentrated
  composites.
\newblock {\em Multiscale Model. Simul.}, 13(4):1312--1326, 2015.

\bibitem{KLY1}
H.~Kang, M.~Lim, and K.~Yun.
\newblock Asymptotics and computation of the solution to the conductivity
  equation in the presence of adjacent inclusions with extreme conductivities.
\newblock {\em J. Math. Pures Appl. (9)}, 99(2):234--249, 2013.

\bibitem{KLY2}
H.~Kang, M.~Lim, and K.~Yun.
\newblock Characterization of the electric field concentration between two
  adjacent spherical perfect conductors.
\newblock {\em SIAM J. Appl. Math.}, 74(1):125--146, 2014.

\bibitem{Kel}
J.~B. Keller.
\newblock {Stresses in Narrow Regions}.
\newblock {\em Journal of Applied Mechanics}, 60(4):1054--1056, 1993.

\bibitem{KL}
J.~Kim and M.~Lim.
\newblock Electric field concentration in the presence of an inclusion with
  eccentric core-shell geometry.
\newblock {\em Math. Ann.}, 373(1-2):517--551, 2019.

\bibitem{L}
H.G. Li.
\newblock Asymptotics for the {E}lectric {F}ield {C}oncentration in the
  {P}erfect {C}onductivity {P}roblem.
\newblock {\em SIAM J. Math. Anal.}, 52(4):3350--3375, 2020.

\bibitem{LLY}
H.G. Li, Y.Y. Li, and Z.~Yang.
\newblock Asymptotics of the gradient of solutions to the perfect conductivity
  problem.
\newblock {\em Multiscale Model. Simul.}, 17(3):899--925, 2019.

\bibitem{LWX}
H.G. Li, F.~Wang, and L.~Xu.
\newblock Characterization of electric fields between two spherical perfect
  conductors with general radii in 3{D}.
\newblock {\em J. Differential Equations}, 267(11):6644--6690, 2019.

\bibitem{LN}
Y.Y. Li and L.~Nirenberg.
\newblock Estimates for elliptic systems from composite material.
\newblock {\em Comm. Pure Appl. Math.}, 56(7):892--925, 2003.

\bibitem{LV}
Y.Y. Li and M.~Vogelius.
\newblock Gradient estimates for solutions to divergence form elliptic
  equations with discontinuous coefficients.
\newblock {\em Arch. Ration. Mech. Anal.}, 153(2):91--151, 2000.

\bibitem{LimYu}
M.~Lim and S.~Yu.
\newblock Stress concentration for two nearly touching circular holes.
\newblock arXiv:1705.10400.

\bibitem{LimYun}
M.~Lim and K.~Yun.
\newblock Blow-up of electric fields between closely spaced spherical perfect
  conductors.
\newblock {\em Comm. Partial Differential Equations}, 34(10-12):1287--1315,
  2009.

\bibitem{Mar}
X.~Markenscoff.
\newblock Stress amplification in vanishingly small geometries.
\newblock {\em Computational Mechanics}, 19(1):77--83, 1996.

\bibitem{Y1}
K.~Yun.
\newblock Estimates for electric fields blown up between closely adjacent
  conductors with arbitrary shape.
\newblock {\em SIAM J. Appl. Math.}, 67(3):714--730, 2007.

\bibitem{Y2}
K.~Yun.
\newblock Optimal bound on high stresses occurring between stiff fibers with
  arbitrary shaped cross-sections.
\newblock {\em J. Math. Anal. Appl.}, 350(1):306--312, 2009.

\bibitem{Y3}
K.~Yun.
\newblock An optimal estimate for electric fields on the shortest line segment
  between two spherical insulators in three dimensions.
\newblock {\em J. Differential Equations}, 261(1):148--188, 2016.

\end{thebibliography}
\end{document}